\providecommand{\U}[1]{\protect\rule{.1in}{.1in}}
\newtheorem{theorem}{Theorem}
\newtheorem{corollary}[theorem]{Corollary}
\newtheorem{lemma}[theorem]{Lemma}
\newtheorem{proposition}[theorem]{Proposition}
\newenvironment{proof}[1][Proof]{\noindent\textbf{#1.} }{\ \rule{0.5em}{0.5em}}
\begin{document}

\title{On groups of finite upper rank}
\author{Dan Segal}
\maketitle

\textbf{Rank and upper rank}

\medskip

For a finite group $G$ with Sylow $p$-subgroup $P$ the \emph{rank} and the
$p$\emph{-rank of }$G$ are defined by%
\begin{align*}
\mathrm{r}(G)  &  =\sup\{\mathrm{d}(H)\mid H\leq G\},\\
\mathrm{r}_{p}(G)  &  =\mathrm{r}(P),
\end{align*}
where as usual $\mathrm{d}(H)$ denotes the minimal size of a generating set
for $H$. When $G$ is an arbitrary group, $\mathcal{F}(G)$ denotes the set of
finite quotient groups of $G$, and we define the (`local' and `global')
\emph{upper ranks} of $G$:%
\begin{align*}
\mathrm{ur}_{p}(G)  &  =\sup\{\mathrm{r}_{p}(Q)\mid Q\in\mathcal{F}(G)\}\\
\mathrm{ur}(G)  &  =\sup\{\mathrm{r}(Q)\mid Q\in\mathcal{F}(G)\}.
\end{align*}
A theorem of Lucchini \cite{L}, first proved for soluble groups by Kov\'{a}cs
\cite{K}, asserts that for a finite group $G,$%
\[
\sup_{p}\mathrm{r}_{p}(G)\leq\mathrm{r}(G)\leq1+\sup_{p}\mathrm{r}_{p}(G),
\]
so the analogue holds for the upper ranks of an infinite group; in particular,
$\mathrm{ur}(G)$ is finite if and only if the local upper ranks $\mathrm{ur}%
_{p}(G)$ are bounded as $p$ ranges over all primes.

Let us denote by $\mathcal{U}$ \emph{the class of all groups} $G$ \emph{such
that} $\mathrm{ur}_{p}(G)$ \emph{is finite for every prime} $p$. One can
describe $\mathcal{U}$ more colourfully as \emph{the class of groups}
\emph{whose profinite completion has a} $p$\emph{-adic analytic Sylow pro-}$p$
\emph{subgroup for every prime} $p$ \cite{DDMS}.

\bigskip

\textbf{Background}

\medskip

More than 20 years ago, Alex Lubotzky conjectured that there is a `subgroup
growth gap' for finitely generated soluble groups. We had recently established
that a finitely generated (f.g.) residually finite group has polynomial
subgroup growth if and only if it is virtually a soluble minimax group (see
\cite{LMS} or \cite{LS}, Chapter 5). I showed in \cite{S} that there exist
f.g. groups of arbitrarily slow non-polynomial subgroup growth; the Lubotzky
question amounts to: do there exist such groups that are \emph{soluble}?

Now if a f.g. soluble group $G$ has subgroup growth of type strictly less than
$n^{\log n/(\log\log n)^{2}}$ then $\mathrm{ur}_{p}(G)$ is finite for every
prime $p$ (\cite{MS}, Prop. 2.6, \cite{S1}, Proposition C). On the other hand,
it is known that a finitely generated residually finite group has finite upper
rank if and only if it is virtually a soluble minimax group \cite{MS1}. So
Lubotzky's conjecture would follow from

\medskip

\textbf{Conjecture A} \cite{S1} \ \emph{Let }$G$ \emph{be a f.g. soluble
group. If }$G\in\mathcal{U}$ \emph{then} $G$ \emph{has finite upper rank.}

\bigskip

Equivalently: if the upper $p$-ranks of $G$ are all \emph{finite}, then they
are \emph{bounded}. If $G$ is assumed to be residually finite, this conclusion
is equivalent to saying that $G$ is a minimax group.

In fact, Conjecture A would imply that a f.g. soluble group canot have
subgroup growth of type strictly between polynomial and $n^{\log n}$
(\cite{S2}, Proposition 5.1).

I am now doubtful about this conjecture, having spent over two decades failing
to prove it. What follows is a survey of what is known on the topic.

\bigskip

\textbf{Olshanski-Osin groups}

\medskip

In \cite{MS} we raised the question: is Conjecture A true even without the
solubility hypothesis? If $G$ is a group with $\mathrm{ur}_{2}(G)$ finite then
$G$ has a subgroup $H$ of finite index such that every finite quotient of $H$
is soluble (\cite{LS}, Theorem 5.5.1). This (at first sight surprising)
consequence of the Odd Order Theorem suggests that the solubility hypothesis
in Conjecture A may be redundant. Without that hypothesis, however, the
conjecture is \emph{false}, as was recently pointed out to me by Denis Osin. I
am very grateful to him for allowing me to reproduce his argument here. It
depends on

\begin{theorem}
\emph{(\cite{OO} Theorem 1.2) }Let $P=(p_{i})$ be an infinite sequence of
primes. There exists an infinite $2$-generator periodic group $G(P)=G_{0}$
having a descending chain of normal subgroups $(G_{i})_{i\geq0}$ with $\bigcap
G_{i}=1$ such that $G_{i-1}/G_{i}$ is abelian of exponent dividing $p_{i}$ for
each $i\geq1$.
\end{theorem}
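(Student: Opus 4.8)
The plan is to realise $G(P)$ as a subgroup of the automorphism group of a rooted tree, using a self-similar recursion of Gupta--Sidki type, and to take the subgroups $G_i$ to be the level stabilisers of that action. Concretely, let $T$ be the spherically homogeneous rooted tree in which every vertex of level $i-1$ has exactly $p_i$ children. Then $\mathrm{Aut}(T)$ carries the usual descending chain of level stabilisers $\mathrm{St}(i)$ with $\bigcap_i\mathrm{St}(i)=1$, and $\mathrm{St}(i-1)/\mathrm{St}(i)$ embeds into the direct product, over the vertices $v$ of level $i-1$, of the copies of $\mathrm{Sym}(p_i)$ acting on the $p_i$ children of $v$. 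So it suffices to produce a $2$-generator subgroup $G\le\mathrm{Aut}(T)$ which is infinite, is periodic, and acts on each litter of children of a level-$(i-1)$ vertex only through a cyclic group of order dividing $p_i$; putting $G_i=G\cap\mathrm{St}(i)$ then does everything, since each $G_i$ is normal in $G$ (the $\mathrm{St}(i)$ being characteristic in $\mathrm{Aut}(T)$), $\bigcap_iG_i=1$, and $G_{i-1}/G_i$ embeds into a direct product of copies of $C_{p_i}$, hence is abelian of exponent dividing $p_i$. (Such a $G$ is necessarily residually finite, as in fact the conclusion of the theorem already forces.)

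For $G$ itself I would imitate Gupta and Sidki: take a ``rooted'' generator $a$ permuting the $p_1$ children of the root as a single $p_1$-cycle and acting trivially below level $1$, together with a ``directed'' generator $b$ defined recursively along a fixed ray by a rule of the shape $b=(c,c^{-1},1,\dots,1,b')$, where on the relevant level-$1$ subtree---itself a copy of the tree built from the shifted sequence $(p_2,p_3,\dots)$---the entry $c$ is the analogous $p_2$-cycle at its root and $b'$ is the corresponding directed automorphism one level down. An easy induction then shows that the only elements of $G=\langle a,b\rangle$ acting nontrivially on a given litter at level $i$ are powers of the relevant $p_i$-cycle (the directed generators contribute trivially there), which is exactly what makes the layers $G_{i-1}/G_i$ of exponent dividing $p_i$ rather than merely dividing $|\mathrm{Sym}(p_i)|$; and infiniteness of $G$ falls out of the self-similar structure, since the projection of $G\cap\mathrm{St}(i-1)$ into a level-$(i-1)$ subtree reproduces the analogous group built from the shifted sequence, forcing each inclusion $G_i\subsetneq G_{i-1}$ to be proper and of finite index, so that $G/G_n$ grows without bound.

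The real work, and the step I expect to be the main obstacle, is proving that $G$ is \emph{periodic}; this is delicate precisely because the sequence $(p_i)$ is arbitrary and may, in particular, contain the prime $2$ infinitely often, where the naive Gupta--Sidki recursion is known not to give a torsion group. The standard route is a length-reduction induction: given $g\in G$, one conjugates it into $\mathrm{St}(1)$ at the cost of replacing $g$ by a bounded power, passes to the $p_1$ coordinate projections $g^{(1)},\dots,g^{(p_1)}$ in the first-level subtrees, shows that $\sum_j\ell(g^{(j)})\le\ell(g)$ with strict inequality outside a controlled ``exceptional'' set of elements, and checks that a suitable bounded power of $g$ always lands in the favourable case; iterating down the levels, all projections eventually become trivial, so $g$ has finite order. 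Carrying this through uniformly over every admissible $(p_i)$ is where real care is needed in the precise choice of $b$---presumably one must switch to a Grigorchuk-style recursion, with a more elaborate directed generator, at the levels where $p_i=2$---and this is the one place the argument is genuinely technical rather than formal. One could instead bypass the tree picture altogether and build $G$ as a limit of hyperbolic groups by iterated graded small cancellation, imposing at stage $i$ the relations that kill $p_i$-th powers modulo the terms of the chain constructed so far; but then the delicate point just migrates to showing that the limit group does not collapse.
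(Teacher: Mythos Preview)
The paper does not give its own proof of this theorem: it is quoted verbatim from Olshanskii--Osin \cite{OO}, and the surrounding text simply \emph{applies} the result. So there is no proof in the present paper to compare your proposal against.

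That said, it is worth noting that the actual argument in \cite{OO} follows your \emph{second} suggestion rather than your first: the groups $G(P)$ are constructed as direct limits of hyperbolic groups, imposing at stage $i$ relations (via graded small cancellation) that force the $i$th layer to have exponent dividing $p_i$, while controlling the process so that the limit remains infinite. Your primary proposal --- a Gupta--Sidki style subgroup of $\mathrm{Aut}(T)$ for the spherically homogeneous tree $T$ with branching sequence $(p_i)$ --- is attractive and would deliver residual finiteness and the layer structure $G_{i-1}/G_i\hookrightarrow\prod C_{p_i}$ almost for free, but you have correctly located the obstruction: periodicity fails for the naive recursion when $p_i=2$, and your sketch does not supply a working substitute there. ``Switch to a Grigorchuk-style recursion at the bad levels'' is a plausible slogan, but producing a \emph{two}-generator group with such a mixed recursion and pushing the length-reduction argument through uniformly over an arbitrary sequence $(p_i)$ is not routine and, to my knowledge, is not available in the literature in this generality. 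So as it stands your primary approach has a genuine gap at precisely the point you flag; the small-cancellation route you mention at the end is the one known to work, and is what \cite{OO} in fact does.
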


Now let $G=G(P)$ where $P$ consists of distinct primes. Each quotient
$G/G_{n}$ is finite. Given $m\in\mathbb{N}$ there exists $n$ such that
$p_{i}\nmid m$ for all $i\geq n$. It is easy to see that each element of
$G_{n}$ has order coprime to $m,$ whence $G_{n}\leq G^{m}$. It follows that
for each prime $p$,%
\begin{align*}
\mathrm{ur}_{p}(G)  &  =\sup\{\mathrm{ur}_{p}(G/G^{m})\mid m\in\mathbb{N}\}\\
&  =\sup\{\mathrm{r}_{p}(G/G_{n})\mid n\in\mathbb{N}\}=\left\{
\begin{array}
[c]{c}%
\mathrm{r}_{p}(G/G_{k})\text{ if }p=p_{k}\\
0\text{ if }p\neq p_{i}~\forall i
\end{array}
\right\}  <\infty.
\end{align*}

Thus $G\in\mathcal{U}$. On the other hand, $G$ is residually finite and not
virtually soluble (as it is infinite, f.g. and periodic), and so $G$ has
infinite upper rank by the theorem from \cite{MS1} quoted above.\medskip

Whether Conjecture A holds with `soluble' replaced by `torsion-free' is still
an open problem.

The groups of slow subgroup growth constructed in \cite{S} are built out of
finite simple groups. The groups $G(P),$ in contrast, have all their finite
quotients soluble: I call such groups of \emph{prosoluble type} (because their
profinite completions are prosoluble). As far as I know, these provide the
first such examples with arbitrarily slow non-polynomial subgroup growth; they
show that Lubotzky's conjecture becomes false if `soluble' is replaced by `of
prosoluble type':

\begin{proposition}
let $f:\mathbb{N}\rightarrow\mathbb{R}_{>0}$ be an unbounded non-decreasing
function. Then there exists a sequence $P$ of primes such that the group
$G=G(P)$ satisfies%
\[
s_{n}(G)\leq n^{f(n)}%
\]
for all large $n$, but $G$ does not have polynomial subgroup growth.
\end{proposition}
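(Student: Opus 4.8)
The plan is to reduce the count of small-index subgroups of $G=G(P)$ to the subgroup count of a single finite quotient $G/G_k$, and then to choose the primes growing so fast that this quotient is both small (a bounded power of the relevant index) and poor in subgroups.

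First I would show that, for $n\in\mathbb{N}$ and $k=k(n):=\#\{i:p_i\le n\}$, every subgroup $H\le G$ with $|G:H|\le n$ contains $G_k$. The group $G_k$ acts on the coset space $G/H$, so the image of $G_k$ in $\mathrm{Sym}(G/H)$ is a finite quotient of $G_k$ of order dividing $n!$. In a periodic group every prime dividing the order of a finite quotient divides the order of some element; but every element of $G_k$ has order a product of primes from $\{p_{k+1},p_{k+2},\dots\}$, all $>n$, while the image has order dividing $n!$, so the image is trivial and $G_k\le H$. Hence $s_n(G)\le s_n(G/G_k)$, which is at most the total number of subgroups of the finite group $G/G_k$.

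Next I would bound that number. Writing $V_i=G_{i-1}/G_i$, the distinctness of the $p_i$ makes $V_i$ a finite elementary abelian $p_i$-group of some rank $d_i$, and a coprimality argument shows that the Sylow $p_i$-subgroup of $G/G_j$ (for $j\ge i$) is isomorphic to $V_i$; so $\mathrm{r}_{p_i}(G/G_j)=d_i$ and, by Lucchini's theorem, $\mathrm{r}(G/G_j)\le 1+D_j$ where $D_j:=\max_{i\le j}d_i$. Counting subgroups of $G/G_k$ layer by layer: a subgroup $H$ is determined by its image $\bar H$ in $G/G_{k-1}$, by $H\cap V_k\le V_k$, and---since $1\to V_k\to G/G_k\to G/G_{k-1}\to 1$ splits, $|V_k|$ being coprime to $|G/G_{k-1}|$---by a choice of complement, of which there are at most $|V_k|^{\mathrm{d}(\bar H)}\le |V_k|^{1+D_k}$. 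Iterating, $G/G_k$ has at most $\bigl(\textstyle\prod_{i\le k}p_i\bigr)^{cD_k^2}$ subgroups for an absolute constant $c$. Finally, the Schreier index formula gives $d_k=\mathrm{d}(G_{k-1}/G_k)\le \mathrm{d}(G_k)\le 1+|G/G_{k-1}|$, so by induction $|G/G_{k-1}|$, and hence $D_{k-1}$, are bounded by an explicit function of $p_1,\dots,p_{k-1}$ alone.

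This lets me build $P$ recursively. Given $p_1<\dots<p_{k-1}$, set $m_{k-1}:=|G/G_{k-1}|$ (bounded in terms of the earlier primes by Step 2), and pick a prime $p_k>m_{k-1}$ with $f(p_k)\ge 2c(1+m_{k-1})^2$; this is possible since $f$ is unbounded. Then $\prod_{i\le k}p_i\le p_km_{k-1}<p_k^2$ and $D_k\le 1+m_{k-1}<p_k$, so for $p_k\le n<p_{k+1}$ we get $s_n(G)\le s_n(G/G_k)\le (p_k^2)^{cD_k^2}\le p_k^{\,2c(1+m_{k-1})^2}\le p_k^{f(p_k)}\le n^{f(n)}$, using that $f$ is non-decreasing; hence $s_n(G)\le n^{f(n)}$ for all large $n$. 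For the last assertion, $G$ is finitely generated and residually finite (since $\bigcap G_i=1$ with each $G/G_i$ finite), and it is infinite and periodic, hence not virtually soluble and a fortiori not virtually a soluble minimax group; by the Lubotzky--Mann--Segal theorem quoted at the start, $G$ does not have polynomial subgroup growth.

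The main obstacle is obtaining a strong enough subgroup estimate in Step 2 and checking that the recursion in Step 3 is well-founded. The naive bound "$G/G_k$ has at most $|G/G_k|^{\log_2|G/G_k|}$ subgroups" is useless here, since $|G/G_k|$ is a power of $p_k$ whose exponent grows with $k$; one really needs the polynomial-in-$|G/G_k|$ estimate coming from the bounded layer ranks $d_i$ and Lucchini's rank bound. Correspondingly, one must verify that the bound on $D_k$ (via Schreier) is available \emph{before} $p_k$ is chosen---which it is, since it depends only on $p_1,\dots,p_{k-1}$---so that choosing $p_k$ large enough to absorb it does not create circularity.
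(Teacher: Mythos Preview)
Your argument is essentially correct and reaches the same conclusion, but your Step~2 takes a different and more hands-on route than the paper. The paper quotes the ready-made estimate $s_n(Q)\le n^{2+r}$ with $r=\max_p \mathrm{r}_p(Q)$ (\cite{LS}, Cor.~1.7.2), bounds $r$ for $Q=G/G_{k(n)}$ by $1+\mu_{k(n)}$ via Schreier, and chooses the primes so that $f(p_k)\ge 3+\mu_k$; this gives a linear exponent in the rank. You instead count \emph{all} subgroups of $G/G_k$ layer by layer using Schur--Zassenhaus and the cocycle bound on complements, obtaining $(\prod_{i\le k}p_i)^{cD_k^2}$, a quadratic exponent in the rank; you then compensate by making the primes grow fast enough that $f(p_k)\ge 2c(1+m_{k-1})^2$. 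Your approach is more elementary (it avoids the black-box subgroup-growth estimate and replaces Lucchini by the coprime layer structure), at the cost of a cruder bound and a faster-growing prime sequence, which is harmless since $f$ is unbounded.

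Two small points to tidy. First, in Step~3 you write ``set $m_{k-1}:=|G/G_{k-1}|$'' during the recursion, but $G=G(P)$ is only defined once the whole sequence $P$ is chosen; what you must actually use (and what your final paragraph correctly identifies) is the a~priori bound $\mu_{k-1}$ depending only on $p_1,\dots,p_{k-1}$, defined recursively by $\mu_0=1$, $\mu_j=\mu_{j-1}\,p_j^{\,1+\mu_{j-1}}$, which dominates $|G/G_{k-1}|$ by the Schreier argument. Second, the inequality $\prod_{i<k}p_i\le m_{k-1}$ need not hold literally if some layer $V_i$ happens to be trivial; it \emph{does} hold with $\mu_{k-1}$ in place of $m_{k-1}$, so once you switch to the a~priori bound both issues disappear and the chain $\prod_{i\le k}p_i\le p_k\mu_{k-1}<p_k^2\le n^2$ goes through cleanly.
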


Here, $s_{n}(G)$ denotes the number of subgroups of index at most $n$ in $G$.

\bigskip

\begin{proof}
Suppose that $P=(p_{i})$ is a strictly increasing sequence of primes. Let $H$
be a proper subgroup of index $\leq n$ in $G$. Then $G_{0}>H\geq G^{n!}\geq
G_{k}$ for some $k.$ Let $k$ be minimal such. Then $G_{k}\leq H\cap
G_{k-1}<G_{k-1}$, so%
\[
p_{k}\mid\left\vert G_{k-1}:H\cap G_{k-1}\right\vert \leq n\text{.}%
\]
It follows that%
\[
s_{n}(G)=s_{n}(G/G_{k(n)})
\]
where $k(n)$ is the largest $k$ such that $p_{k}\leq n$.

Put $Q_{n}=G/G_{k(n)}$. According to \cite{LS}, Corollary 1.7.2,%
\[
s_{n}(Q_{n})\leq n^{2+r(n)}%
\]
where $r(n)=\max_{p}\mathrm{r}_{p}(Q_{n})$. Write $m_{j}=\left\vert
G:G_{j-1}\right\vert $ for each $j\geq1$. Since $G$ is a $2$-generator group,
$G_{j-1}$ can be generated by $1+m_{j}$ elements, and so $\mathrm{r}_{p_{j}%
}(Q_{n})\leq1+m_{j}$ for $j\leq k(n),$ while $\mathrm{r}_{p}(Q_{n})=0$ if
$p\notin\{p_{1},\ldots,p_{k(n)}\}$.

Now we can choose the sequence $P$ recursively as follows: $p_{1}$ is
arbitrary. Set $\mu_{1}=1$. Given $p_{i}$ and $\mu_{i}$ for $i\leq t$, set%
\[
\mu_{t+1}=\mu_{t}\cdot p_{t}^{1+\mu_{t}}%
\]
and let $p_{t+1}>p_{t}$ be a prime so large that%
\[
f(p_{t+1})\geq3+\mu_{t+1}.
\]

Note that $\left\vert G_{j-1}:G_{j}\right\vert \leq p_{j}^{1+m_{j}}$ for each
$j$, so $m_{j+1}\leq m_{j}\cdot p_{j}^{1+m_{j}}$. It follows that $m_{j}%
\leq\mu_{j}$ for all $j$. Then%
\begin{align*}
r(n)\leq\max\{1+m_{j}\mid j\leq k(n)\}  &  \leq\max\{1+\mu_{j}\mid j\leq
k(n)\}\\
&  =1+\mu_{k(n)}\\
&  \leq f(p_{k(n)})-2\leq f(n)-2.
\end{align*}
Thus
\[
s_{n}(G)=s_{n}(Q_{n})\leq n^{2+r(n)}\leq n^{f(n)}.
\]

Of course, $G$ does not have polynomial subgroup growth because it has
infinite upper rank, as observed above.
\end{proof}

\bigskip

\textbf{Minimax groups: a reminder}

\medskip

Let us denote by $\mathcal{S}$ the class of all residually finite virtually
soluble minimax groups. The following known results will be used without
special mention:

\begin{itemize}
\item If $G\in\mathcal{S}$ then $G$ is virtually nilpotent-by-abelian.

\item If $G\in\mathcal{S}$ then $G$ is virtually residually (finite nilpotent).

\item A minimax group is in $\mathcal{S}$ if and only if it is virtually torsion-free.

\item The class $\mathcal{S}$ is extension-closed.

\item If $G$ is f.g. and virtually residually nilpotent then $G$ is residually finite.

\item If $G$ has a nilpotent normal subgroup $N$ such that $G/N^{\prime}$ is
(a) minimax \emph{resp}. (b) of finite upper rank, then $G$ is (a) minimax,
\emph{resp}. (b) of finite upper rank.

\item Let $G$ be f.g. and residually finite. Then $\mathrm{ur}(G)$ is finite
if and only if $G\in\mathcal{S}$.
\end{itemize}

For most of these, see \cite{LR}, Chapter 5 and Chapter 1. The penultimate
claim is an easy consequence of \cite{LR}, \textbf{1.2.11}. The final claim is
\cite{MS1}, Theorem A.\bigskip

\textbf{Some known cases}

\medskip

\begin{proposition}
\label{P0}Let $G$ be a f.g. nilpotent-by-polycyclic group. If $G\in
\mathcal{U}$ then $G$ is a minimax group.
\end{proposition}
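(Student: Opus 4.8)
The plan is to reduce to the case of a finitely generated abelian-by-polycyclic group and then exploit the structure theory of such groups (via the work of Roseblade, P. Hall, and the ring-theoretic machinery behind it), together with the fact that membership in $\mathcal{U}$ controls the $p$-ranks of all finite quotients. Suppose $G$ has a nilpotent normal subgroup $N$ with $G/N$ polycyclic. By the last item in the reminder above, it suffices to show that $G/N'$ is minimax; so we may assume at the outset that $N=A$ is abelian, i.e. $G$ is f.g. abelian-by-polycyclic. We may also pass to a subgroup of finite index and so assume $G/A$ is poly-(infinite cyclic).

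The first main step is to understand $A$ as a module. Write $Q=G/A$, so $A$ is a module over the group ring $R=\mathbb{Z}Q$, which is a Noetherian ring since $Q$ is polycyclic; hence $A$ is a Noetherian $R$-module and in particular $A$ is a f.g. $R$-module. Now I would use the hypothesis $G\in\mathcal{U}$ to constrain $A$. The key observation is that for each prime $p$, finiteness of $\mathrm{ur}_p(G)$ forces the elementary abelian $p$-sections of $G$ appearing in finite quotients to have bounded rank; applied to sections of $A$, this says that for every $R$-module quotient $\bar A$ of $A$ and every maximal ideal, the relevant composition-factor multiplicities are bounded. Concretely, I want to deduce that $A\otimes\mathbb{F}_p$ (equivalently $A/pA$, viewed as a module for $\bar R = \mathbb{F}_pQ$) has finite composition length bounded independently of... — rather, that the "plinth" analysis of Roseblade applies. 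The cleanest route is: if $A$ were not minimax as a group, then by standard module theory over $\mathbb{Z}Q$ there would be a quotient of $A$ that is either (i) a f.g. module over $\mathbb{F}_p[Q]$ for some prime $p$ of infinite dimension in a way producing unbounded $p$-rank, or (ii) torsion-free of infinite $\mathbb{Z}$-rank producing the same for infinitely many primes. Either way one manufactures finite quotients of $G$ (using that $G$ is residually finite here — f.g. abelian-by-polycyclic groups are residually finite by P. Hall / Jategaonkar / Roseblade) violating $G\in\mathcal{U}$.

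The technical heart, and the step I expect to be the main obstacle, is making precise the dichotomy "$A$ not minimax as abelian group $\Rightarrow$ some $\mathrm{ur}_p(G)=\infty$." The subtlety is that $A$ can be minimax as a $\mathbb{Z}Q$-module sense while the group $Q$ acts so as to spread a single module generator across infinitely many primes or infinitely many "layers." The right tool is Roseblade's theorem on finitely generated modules over polycyclic group rings: such a module $A$ has a finite series whose factors are, up to finite obstructions, either $\mathbb{Z}$-torsion-free of finite rank or are $p$-torsion for a single prime $p$ and then are f.g. over $\mathbb{F}_p Q$; and in the latter case one examines whether the factor is finite-dimensional over $\mathbb{F}_p$ (contributing a bounded amount to $\mathrm{r}_p$ of finite quotients) or infinite-dimensional, in which case — because $Q$ is polycyclic and acts by a finitely generated matrix group — one can find congruence-type finite quotients on which the $p$-rank grows without bound. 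Dually, a $\mathbb{Z}$-torsion-free factor of infinite rank forces, for all but finitely many primes $p$, reductions mod $p$ of unbounded dimension, again killing $G\in\mathcal{U}$. Assembling these, $A$ must be minimax as an abelian group; then $G$, being (minimax abelian)-by-polycyclic, is minimax, completing the proof. The delicate points to get right are the reduction to abelian-by-poly-$\mathbb{Z}$, the invocation of residual finiteness to see enough finite quotients, and the passage from "module has an infinite-rank or infinite-dimensional factor" to an explicit unbounded sequence of finite $p$-quotients — this last is where Roseblade's structure theory and a careful choice of cofinite submodules must be combined.
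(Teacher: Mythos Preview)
Your reduction to the abelian-by-polycyclic case via passing to $G/N'$ matches the paper exactly, as does the observation that $A=N$ is then a Noetherian $\mathbb{Z}[G/A]$-module. After that point, however, the paper takes a much shorter and more elementary route than the Roseblade-style structure theory you propose.

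The paper argues as follows. Since $A$ is Noetherian, its torsion subgroup $T$ has finite exponent; let $\sigma$ be the set of primes dividing it. By P.~Hall's \emph{generic freeness lemma} there is a free abelian subgroup $F\leq A$ and a finite set of primes $\pi$ such that $A/F$ is a $(\pi\cup\sigma)$-group. Now fix a \emph{single} prime $p\notin\pi\cup\sigma$. Then $F/F^{p}\cong A/A^{p}$, and since $G/A^{p}$ is residually finite, the image of $A/A^{p}$ in any finite quotient of $G$ has rank at most $\mathrm{ur}_{p}(G)=r_{p}$; hence $F$ is free abelian of rank at most $r_{p}$. But then for \emph{every} prime $q\notin\pi\cup\sigma$ one has $\mathrm{ur}_{q}(G)\leq\mathrm{ur}(G/A)+\mathrm{rk}(F)\leq\mathrm{ur}(G/A)+r_{p}$, so the upper $p$-ranks are bounded outside a finite set, and $\mathrm{ur}(G)<\infty$.

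The point is that generic freeness already isolates a free abelian piece whose rank, once bounded at \emph{one} good prime, controls all the others; no case analysis on composition factors and no dichotomy argument is needed. Your sketch could perhaps be completed, but the step you yourself flag as the main obstacle --- manufacturing from an ``infinite-rank or infinite-dimensional factor'' an explicit sequence of finite quotients with unbounded $p$-rank --- is exactly what the generic-freeness trick circumvents. Note also that the decomposition you attribute to Roseblade (a finite series whose factors are either torsion-free of finite rank or $p$-torsion for a single $p$) is not what his theory gives in general: a finitely generated torsion-free $\mathbb{Z}Q$-module such as $\mathbb{Z}Q$ itself has infinite $\mathbb{Z}$-rank, so that statement would need reformulating before your argument could be made rigorous.
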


\begin{proof}
Let $N$ be a nilpotent normal subgroup of $G$ with $G/N$ polycyclic. It will
suffice to show that $G/N^{\prime}$ is minimax, so replacing $G$ by this
quotient we may assume that $N$ is abelian. Then $N$ is Noetherian as a
$G/N$-module, so the torsion subgroup $T$ of $N$ has finite exponent, $e$ say.
Let $\sigma$ be the set of prime divisors of $e$.

By P Hall's `generic freeness lemma' (cf. \cite{LR}, 7.1.6) $N/T$ has a free
abelian subgroup $F_{1}/T$ such that $N/F_{1}$ is a $\pi$-group for some
finite set of primes $\pi$. Then $F_{1}=T\times F$ where $F$ is free abelian,
and $N/F$ is a $\pi\cup\sigma$-group.

Let $p\notin\pi\cup\sigma$ be a prime. Then $N=FN^{p}$ and $N^{p}\cap F=F^{p}%
$, so $F/F^{p}\cong N/N^{p}$. Now $G/N^{p}$ is residually finite and the image
of $N/N^{p}$ in any finite quotient of $G/N^{p}$ has rank at most
$\mathrm{ur}_{p}(G)=r_{p}$; it follows that $\left\vert F/F^{p}\right\vert
=\left\vert N/N^{p}\right\vert \leq p^{r_{p}}$. Therefore $F$ has rank at most
$r_{p}.$ Hence for each prime $q\notin\pi\cup\sigma$ we have%
\[
\mathrm{ur}_{q}(G)\leq\mathrm{ur}(G/N)+\mathrm{ur}(F)\leq\mathrm{ur}%
(G/N)+r_{p}.
\]
It follows that $\mathrm{ur}(G)$ is finite, since $G/N$ is polycyclic and
$\pi\cup\sigma$ is finite.

As $G$ is residually finite it follows that $G$ is a minimax group. (For the
quoted properties of f.g. abelian-by-polycyclic groups, see for example
\cite{LR}, Chapters 4 and 7.)
\end{proof}

\bigskip

The upper $p$-rank of a group $G$ can equivalently be defined as \emph{the
rank of a Sylow pro-}$p$ \emph{subgroup }$P$\emph{ of} $\widehat{G}$, the
profinite completion of $G$, where for a profinite group $P$, the rank of $P$
is%
\[
\mathrm{r}(P)=\sup\{\mathrm{r}(P/N)\mid N\vartriangleleft P,~N\text{ open}\}.
\]

The pro-$p$ groups of finite rank are well understood (see \cite{DDMS}); in
particular, they are linear groups in characteristic $0$.

\begin{proposition}
\label{P1}\emph{(\cite{LS}, Window 8, Lemma 9)} Let $K$ be a f.g. residually
nilpotent group. Suppose that the pro-$p$ completion $\widehat{K}_{p}$ of $K$
has finite rank for some prime $p$. Then there exists a finite set of primes
$\pi$ such that the natural map%
\[
K\rightarrow\prod\nolimits_{q\in\pi}\widehat{K}_{q}%
\]
is injective.
\end{proposition}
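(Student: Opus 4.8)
The plan is to recast the conclusion combinatorially and then control the relevant data by the single finite‑rank hypothesis. For a finite set of primes $\pi$, the map $K\to\prod_{q\in\pi}\widehat K_q$ is injective exactly when $\bigcap_{q\in\pi}\ker(K\to\widehat K_q)=1$, and since a finite nilpotent $\pi$‑group is the direct product of its Sylow subgroups, $\bigcap_{q\in\pi}\ker(K\to\widehat K_q)$ is precisely the intersection of all $N\vartriangleleft K$ with $K/N$ a finite nilpotent $\pi$‑group. So I must find a finite $\pi$ making $K$ residually a finite nilpotent $\pi$‑group. Note that $K$ is \emph{already} residually (finite nilpotent): if $x\neq1$ then $x\notin\gamma_j(K)$ for some $j$ by residual nilpotence, and $K/\gamma_j(K)$ is finitely generated nilpotent, hence residually finite nilpotent. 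The only thing at issue is to confine to finitely many primes the $p$'s that actually occur.

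Write $A_i=\gamma_i(K)/\gamma_{i+1}(K)$, a finitely generated abelian group, and $r_i=\mathrm{rank}_{\mathbb Z}(A_i)$. The key step — and the main obstacle — is the inequality $\sum_i r_i\le \mathrm r(\widehat K_p)=:d$. To prove it, fix $j$ and set $N_j=K/\gamma_j(K)$, a finitely generated nilpotent group; its pro‑$p$ completion $\widehat{(N_j)}_p$ is a quotient of $\widehat K_p$, hence has rank $\le d$, and it is a finitely generated nilpotent pro‑$p$ group whose lower central factors are $A_i\otimes\mathbb Z_p$ for $i<j$. A pro‑$p$ group of rank $\le d$ has a powerful open subgroup $H$ with $\mathrm d(H)\le d$, and such an $H$ is a topological product of at most $d$ procyclic subgroups \cite{DDMS}; each procyclic pro‑$p$ group contributes at most $1$ to the Hirsch length, so $H$, and therefore $\widehat{(N_j)}_p$, has Hirsch length $\le d$. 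On the other hand that Hirsch length equals $\sum_{i<j}r_i$. Letting $j\to\infty$ gives $\sum_i r_i\le d$.

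Hence there is $c$ with $r_i=0$ for all $i\ge c$, so each $A_i$ with $i\ge c$ is a finite abelian group; let $\sigma$ be the (finite) set of prime divisors of $A_c$. Because $K$ is finitely generated, the associated Lie ring $\bigoplus_i A_i$ is generated in degree $1$, so $A_{i+1}$ is a homomorphic image of $A_i\otimes_{\mathbb Z}A_1$ for every $i$; consequently, if $A_i$ is a $\sigma$‑group then so is $A_{i+1}$. By induction $A_i$ is a finite $\sigma$‑group for every $i\ge c$.

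It remains to assemble $\pi$. The group $N_c=K/\gamma_c(K)$ is finitely generated nilpotent, so it is residually a finite nilpotent $\tau$‑group, where $\tau$ is the (finite) set of prime divisors of its torsion subgroup together with the prime $2$; put $\pi=\sigma\cup\tau$. Given $1\neq x\in K$: if $x\notin\gamma_c(K)$, its image in $N_c$ is nontrivial and survives in a finite nilpotent $\tau$‑quotient of $K$. If $x\in\gamma_c(K)$, residual nilpotence gives $j>c$ with $x\notin\gamma_j(K)$; then $K/\gamma_j(K)$ is finitely generated nilpotent and the prime divisors of its torsion subgroup lie in $\tau\cup\sigma$ (the lower central factors below degree $c$ contribute primes in $\tau$, those of degree $\ge c$ primes in $\sigma$), so it is residually a finite nilpotent $\pi$‑group and $x$ survives in such a quotient of $K$. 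Either way $x$ has nontrivial image in a finite nilpotent $\pi$‑group quotient of $K$; so $K$ is residually a finite nilpotent $\pi$‑group, which is the desired injectivity. The one substantial point is the Hirsch‑length bound of the second paragraph; the rest is bookkeeping with the lower central series.
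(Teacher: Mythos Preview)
The paper does not supply a proof of this proposition; it is quoted without argument from \cite{LS}, Window~8, Lemma~9. Your proof is correct and follows what is essentially the standard route: the single finite-rank hypothesis bounds the Hirsch length of every nilpotent quotient $K/\gamma_j(K)$ by $d=\mathrm{r}(\widehat K_p)$ (via the dimension of its pro-$p$ completion, which is a quotient of $\widehat K_p$), so the torsion-free ranks $r_i$ of the lower central factors satisfy $\sum_i r_i\le d$; hence $A_i$ is finite for all $i\ge c$, and the Lie-ring relation forces the primes of these finite factors into a fixed finite set $\sigma$. The finish, using that a f.g.\ nilpotent group $N$ is residually a finite nilpotent $\mu$-group whenever $\mu$ is nonempty and contains all primes dividing $|T(N)|$, is classical. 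One small remark on your parenthetical ``the lower central factors below degree $c$ contribute primes in $\tau$'': the torsion of $A_i$ for $i<c$ can in principle involve primes outside $\tau$ (a torsion-free nilpotent group may have torsion in its abelianisation), but what you actually use---and what holds---is that the image of $T(N_j)$ in $N_c$ lies in $T(N_c)$, so $T(N_j)$ is an extension of a subgroup of the finite $\sigma$-group $\gamma_c(N_j)$ by a $\tau$-group. Your conclusion is unaffected.
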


This is the key to

\begin{theorem}
\label{rn}\emph{(\cite{S2}, Theorem 5) }Let $G$ be a f.g. group that is
virtually residually nilpotent. If $G\in\mathcal{U}$ then $G$ has finite upper rank.
\end{theorem}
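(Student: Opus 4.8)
The plan is to reduce the general virtually-residually-nilpotent case to the residually-nilpotent case, and then to exploit Proposition~\ref{P1} together with the structure theory of pro-$p$ groups of finite rank. First I would pass to a finite-index subgroup $K\leq G$ that is residually nilpotent; since $\mathrm{ur}(G)$ is finite if and only if $\mathrm{ur}(K)$ is finite (finite-index subgroups and overgroups differ only in bounded rank), and since $K\in\mathcal{U}$ whenever $G\in\mathcal{U}$, it suffices to prove the theorem when $G$ itself is f.g.\ and residually nilpotent. Fix any prime $p$; because $G\in\mathcal{U}$, the pro-$p$ completion $\widehat{G}_{p}$ has finite rank, so Proposition~\ref{P1} supplies a finite set of primes $\pi$ such that $G$ embeds in $\prod_{q\in\pi}\widehat{G}_{q}$.

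The next step is to understand each factor $\widehat{G}_{q}$. Since $G\in\mathcal{U}$, each $\widehat{G}_{q}$ is a f.g.\ pro-$q$ group of finite rank, hence $q$-adic analytic, hence a compact $q$-adic analytic group; in particular it is (topologically) finitely generated, virtually a uniform pro-$q$ group, and linear over $\mathbb{Q}_q$. I would then argue that $G$, being a f.g.\ group embedded in the product of finitely many such $q$-adic analytic groups, is itself linear over a suitable ring, or more directly that $G$ inherits the finiteness properties we need: each $\widehat{G}_{q}$ has a poly-(procyclic) open subgroup, so its image of $G$ is virtually contained in a group that is residually (finite nilpotent of bounded rank). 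The aim is to conclude that $G$ is (virtually) a soluble minimax group, whence by the final bulleted fact (or \cite{MS1}, Theorem~A) $\mathrm{ur}(G)$ is finite. Concretely: a f.g.\ residually nilpotent group embedding in finitely many compact $q$-adic analytic groups has the property that each associated graded Lie algebra is finite-dimensional, which forces the lower central factors to have bounded rank and to be minimax; the penultimate bulleted property (nilpotent normal subgroup with suitable quotient) or a direct minimax argument then closes the loop.

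The main obstacle I expect is the passage from ``$G$ embeds in a finite product of $q$-adic analytic groups'' to ``$G$ is minimax'', i.e.\ controlling the structure of $G$ from the structure of its pro-$q$ completions uniformly across $q\in\pi$. A clean route is: the closure of the image of $G$ in each $\widehat{G}_{q}$ is all of $\widehat{G}_{q}$, which is a f.g.\ $q$-adic analytic group and therefore has a finite series with free abelian (over $\mathbb{Z}_q$) or finite factors of bounded rank; intersecting these series back in $G$ shows that $G$ has a finite subnormal series whose factors are residually finite groups of finite rank that are moreover abelian-by-(finite rank). One then invokes that a f.g.\ residually finite group of finite upper rank is in $\mathcal{S}$ — but that is precisely what we are trying to prove, so the argument must instead build the minimax series explicitly from the lower central series of $G$, using that each lower central factor $\gamma_i(G)/\gamma_{i+1}(G)$ is a f.g.\ abelian group whose pro-$q$ completions (for $q\in\pi$) have finite rank and which is residually finite via the embedding, hence is itself minimax; combined with the fact that only finitely many lower central factors can be infinite once the group embeds in $q$-adic analytic groups (since the Lie algebra $\bigoplus \gamma_i/\gamma_{i+1}\otimes\mathbb{Q}_q$ is finite-dimensional), one gets that $G$ is nilpotent-by-(minimax) or, after a further bounded reduction, directly minimax. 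Carrying out this last extraction carefully — in particular verifying the finite-dimensionality of the graded Lie algebra and that it forces nilpotency-by-minimax — is where the real work lies; the rest is routine once Proposition~\ref{P1} is in hand.
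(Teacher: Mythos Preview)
Your opening moves are right and match the paper: pass to a finite-index residually nilpotent subgroup $K$, and apply Proposition~\ref{P1} to embed $K$ in $\prod_{q\in\pi}\widehat{K}_{q}$ with each factor a pro-$q$ group of finite rank. The divergence, and the genuine gap, is in the next step.

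The paper does \emph{not} try to read off minimax structure directly from the embedding into $q$-adic analytic groups. Instead it observes two things. First, Proposition~\ref{P1} actually gives more than an embedding: it shows $K$ is residually (finite nilpotent of rank at most $r$) for $r=\max_{q\in\pi}\mathrm{ur}_{q}(G)$. Second, since $\mathrm{ur}_{2}(G)<\infty$, one may also arrange (via the Odd Order Theorem, cf.\ \cite{LS}, Theorem 5.5.1) that every finite quotient of $K$ is soluble. These two facts together feed into the main theorem of \cite{S5}, which says that a f.g.\ group that is residually (finite soluble of bounded rank) is virtually nilpotent-by-abelian. Once $K$ is virtually nilpotent-by-abelian, Proposition~\ref{P0} finishes the job. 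The structural theorem from \cite{S5} is the missing idea in your outline.

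Your proposed substitute---controlling the lower central factors via the graded Lie algebra and invoking finite-dimensionality of $\bigoplus_{i}\gamma_{i}/\gamma_{i+1}\otimes\mathbb{Q}_{q}$---does not work as stated. A dense embedding of $K$ into a $q$-adic analytic pro-$q$ group $P$ does not force the (abstract) lower central series of $K$ to have only finitely many infinite factors: $P$ itself need not be nilpotent (e.g.\ $\mathbb{Z}_{q}\rtimes\mathbb{Z}_{q}$ with action by $1+q$ has $\gamma_{i}(P)/\gamma_{i+1}(P)\neq 0$ for all $i$), and the map $\gamma_{i}(K)/\gamma_{i+1}(K)\to\gamma_{i}(P)/\gamma_{i+1}(P)$ is not injective in general, so no finite-dimensionality is inherited. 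More broadly, ``$K$ embeds in a finite product of $q$-adic analytic groups'' is far too weak on its own to force minimax---free groups embed in $\mathrm{SL}_{2}(\mathbb{Z}_{p})$---and the extra information you need (bounded rank of the \emph{finite nilpotent} images, plus prosolubility) is exactly what the paper packages and hands to \cite{S5}. You correctly flagged this step as ``where the real work lies''; the point is that this work has already been done elsewhere and is quoted, not redone.
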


\begin{proof}
It follows from Proposition \ref{P1} that $G$ has a subgroup $K$ of finite
index such that $K$ is residually (finite nilpotent of rank at most $r$); here
$r=\max_{q\in\pi}\mathrm{ur}_{q}(G)$. By a result mentioned above, we may also
take it that every finite quotient of $K$ is soluble. The main result of
\cite{S5} now shows that $K$ is virtually nilpotent-by-abelian (see also
\cite{LS}, Window 8, Corollary 5), and the result follows by Proposition
\ref{P0}.
\end{proof}

\bigskip

Let $\mathcal{H}$ denote the class of all groups $G$ with the property:
\emph{every virtually residually nilpotent quotient of} $G$ \emph{is a minimax
group}.\medskip

Theorem \ref{rn} shows that finitely generated groups in $\mathcal{U}$ belong
to $\mathcal{H}$. It is \emph{not} true that every f.g. soluble residually
finite group in $\mathcal{H}$ has finite upper rank:\bigskip

\begin{proposition}
\emph{(\cite{PS}, Proposition 10.1) }Let $p$ be a prime, let
\[
H=\left\langle x_{n}~(n\in\mathbb{Z});x_{n}^{p}=x_{n-1}\right\rangle
\]
be the additive group of $\mathbb{Z}[1/p]$ written multiplicatively, and let
$\tau$ be the automorphism of $H$ sending $x_{n}$ to $x_{n+1}$ for each $n$.
Extend $\tau$ to an automorphism of the group algebra $\mathbb{F}_{p}H$ and
then to an automorphism of $W=\mathbb{F}_{p}H\rtimes H=C_{p}\wr H$. Set
$G=W\rtimes\left\langle \tau\right\rangle $. Then

\begin{itemize}
\item $G$ is a $3$-generator residually finite abelian-by-minimax group

\item $G\in\mathcal{H}$

\item $\mathrm{ur}_{q}(G)=2$ for every prime $q\neq p$

\item $\mathrm{ur}_{p}(G)$ is infinite.
\end{itemize}
\end{proposition}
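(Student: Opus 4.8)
The plan is to analyse the four listed properties in turn, exploiting the explicit wreath-product structure $W = \mathbb{F}_p H \rtimes H = C_p \wr H$ with $H \cong \mathbb{Z}[1/p]$ and the action of $\tau$.

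First I would establish the structural claims. The group $G = W \rtimes \langle\tau\rangle$ has a normal subgroup $M = \mathbb{F}_p H$ (the base group of the wreath product) which is elementary abelian, and $G/M = H \rtimes \langle\tau\rangle$ is a metabelian minimax group (since $H$ is a rank-one torsion-free abelian minimax group normalised by $\tau$), so $G$ is abelian-by-minimax. For the generating set: $H$ is generated by $x_0$ together with $\tau$ (which supplies all the other $x_n$ by conjugation), and $M = \mathbb{F}_p H$ is generated as a $\langle H, \tau\rangle$-module by the single element $1 \in \mathbb{F}_p H$; hence $G = \langle x_0, 1_{\mathbb{F}_pH}, \tau\rangle$ is a $3$-generator group. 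Residual finiteness follows because $G/M$ is (polycyclic-by-finite, hence) residually finite, and the $M^{\mathfrak{a}}$ for cofinite ideals $\mathfrak{a}$ of the Noetherian-as-module $M$ intersect trivially (Hall's residual finiteness theorem for f.g.\ abelian-by-polycyclic groups would be overkill; here one can write down the relevant finite quotients directly).

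Next, the rank computations. For a prime $q \neq p$: every finite quotient $Q$ of $G$ has trivial image of the $p$-group $M$ in its Sylow $q$-subgroup, so $\mathrm{r}_q(Q) = \mathrm{r}_q(Q/\overline{M})$ where $Q/\overline M$ is a finite quotient of the metabelian minimax group $G/M = H\rtimes\langle\tau\rangle$; one checks $\mathrm{ur}_q(H\rtimes\langle\tau\rangle) = 2$ by a direct examination of its finite quotients (the finite quotients of $H$ are cyclic, and one extra generator for $\tau$), giving $\mathrm{ur}_q(G) = 2$. For the prime $p$, the point is that finite $p$-quotients of $G$ receive a large image of $M$: because $\tau$ acts on $H$ as a "shift", the element $x_0 - 1 \in \mathbb{F}_p H$ generates, under the $\tau$-action, a submodule whose finite $p$-quotients have unbounded rank — concretely, $M$ modulo a suitable $\tau$-invariant cofinite subgroup together with a suitable congruence on $H$ produces finite $p$-groups of arbitrarily large rank. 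This exhibits $\mathrm{ur}_p(G) = \infty$.

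Finally, to show $G \in \mathcal{H}$ I must show every virtually residually nilpotent quotient $\bar G$ of $G$ is minimax. Let $\bar G = G/L$. If $\bar M$, the image of $M$, is trivial then $\bar G$ is a quotient of the minimax group $G/M$ and we are done; so suppose $\bar M \neq 1$. The key is that a nontrivial $\mathbb{F}_pH$-module quotient of $M$ on which $H$ acts with the shift $\tau$ available cannot be "residually nilpotent over $\bar G$" unless it is finite: a residually nilpotent group $\bar G$ has the property that the lower central series of each of its finite-index subgroups intersects in the relevant subnormal piece, and one shows that the action of $H$ on an infinite section of $M$ forces the relevant central series not to terminate — so $\bar M$ must be finite, whence $\bar G$ is finite-by-(minimax) and thus minimax. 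I expect \emph{this last step} to be the main obstacle: making precise why "virtually residually nilpotent $+$ the shift action on an infinite $\mathbb{F}_p$-section" is contradictory requires a careful argument about how $\tau$ moves the "support" in $\mathbb{F}_p H$, ruling out any cofinite $\tau$-invariant submodule with infinite residually-nilpotent-compatible quotient; the cleanest route is probably to reduce to the case where $\bar M$ is a simple $\mathbb{F}_p\langle H,\tau\rangle$-module and observe that a faithful such module for this group is not "nilpotent-compatible" — equivalently, that $\bar G$ would then fail to be residually (finite nilpotent) on any finite-index subgroup, since the relevant dimension subgroups do not separate points of an infinite $\bar M$.
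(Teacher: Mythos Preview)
The paper does not prove this proposition; it is quoted from \cite{PS}, Proposition 10.1, and no argument is reproduced. So there is nothing in the present paper to compare your attempt against, and I can only assess the proposal on its own terms.

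Your treatment of the first and third bullets is sound: the three generators are correct, the abelian-by-minimax structure is clear, and the reduction $\mathrm{ur}_q(G)=\mathrm{ur}_q(G/M)$ for $q\neq p$ via the normal $p$-subgroup $M$ is right; the metacyclic structure of finite quotients of $G/M\cong BS(1,p)$ then gives the bound $2$. For the fourth bullet your description is muddled. The reference to ``$x_0-1$'' and the $\tau$-shift does not by itself produce large $p$-rank; what you want is the explicit family of finite quotients obtained from the surjection $H\to C_q$ for primes $q\neq p$: this yields a finite quotient $(\mathbb{F}_pC_q)\rtimes(C_q\rtimes C_d)$ of $G$ (with $d$ the order of $p$ modulo $q$), whose Sylow $p$-subgroup is $\mathbb{F}_pC_q\cong \mathbb{F}_p^{\,q}$ of rank $q$. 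Letting $q\to\infty$ gives $\mathrm{ur}_p(G)=\infty$.

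The genuine gap is in the argument for $G\in\mathcal{H}$. Your reduction to ``$\bar M$ finite'' is correct, but neither of your two proposed mechanisms --- ``$\tau$ moves the support'' and ``dimension subgroups fail to separate'' --- is the operative one, and as stated neither can be completed. The key point you are missing is that $H=\mathbb{Z}[1/p]$ is $p$-divisible, so in characteristic $p$ one has $(h-1)=(h^{1/p}-1)^p$ for every $h\in H$; hence the augmentation ideal $\mathfrak{a}$ of any quotient of $H$ in its $\mathbb{F}_p$-group algebra satisfies $\mathfrak{a}=\mathfrak{a}^2$. Consequently $[\bar M_0,H'',\ldots,H'']=\bar M_0\mathfrak{a}$ stabilises (where $H''$ is the finite-index image of $H$ inside a residually nilpotent $K\trianglelefteq_f\bar G$ and $\bar M_0=K\cap\bar M$), and residual nilpotence of $K$ forces $\bar M_0\mathfrak{a}=0$. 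Thus $H$ acts on $\bar M_0$ through a finite cyclic quotient. One then finishes by observing that the image of the (non-finitely-generated) group $H$ in each finitely generated nilpotent quotient $K/\gamma_n(K)$ must be finite, which together with finiteness of torsion in such quotients pins down $\bar M_0$, hence $\bar M$, as finite. Without the idempotence observation your sketch has no leverage on the lower central series of $K$.
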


This shows, also, that the hypothesis of Conjecture A can't be weakened by
omitting finitely many primes.

Still, the strongest result so far obtained towards Conjecture A rests on a
consideration of certain groups in $\mathcal{H}$. It seems clear that the
trouble with the last example is due to the presence of `bad' torsion; if we
exclude this we obtain the following:

\begin{theorem}
\label{Hthm}\emph{(\cite{PS}, Theorem 3.2)} Let $G\in\mathcal{H}$ be f.g. and
residually finite. Suppose that $G$ has a metabelian normal subgroup $N$ with
$G/N$ polycyclic. Then $G/N^{\prime}$ is minimax. If $N^{\prime}$ has no $\pi
$-torsion where $\pi=\mathrm{spec}(G/N^{\prime})$ then $G$ is minimax.
\end{theorem}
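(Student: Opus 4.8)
The plan is to prove the two assertions separately, in each case reducing to the minimaxity of an abelian normal section and extracting the conclusion from $G\in\mathcal H$ (which, being defined by a condition on quotients, passes to every quotient of $G$).

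\emph{First assertion.} Replace $G$ by $\bar G:=G/N'$, so that $B:=N/N'$ is abelian and normal in the finitely generated group $\bar G\in\mathcal H$, with $Q:=\bar G/B$ polycyclic. By P.\ Hall's theorem $B$ is a finitely generated module over the Noetherian ring $R:=\mathbb Z[Q]$, and since the class of minimax groups is closed under extensions and $Q$ is polycyclic, it suffices to prove that $B$ is minimax as an abelian group. Suppose not. Then $B$ has a minimal prime $\mathfrak p$ of its support with $R/\mathfrak p$ non-minimax: if every minimal prime of $\mathrm{Supp}(B)$ had minimax residue domain, a finite filtration of $B$ with factors $R/\mathfrak q_i$, each $\mathfrak q_i$ lying over such a minimal prime, would force $B$ minimax. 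Form the quotient $Y:=\bar G/P$, where $P\le B$ is the submodule whose image in $B/\mathfrak pB$ is the $(R/\mathfrak p)$-torsion submodule; then $Y$ has abelian normal part $\bar B$, a finitely generated torsion-free $R/\mathfrak p$-module of positive rank (by minimality of $\mathfrak p$ and Nakayama), hence containing a copy of $R/\mathfrak p$ and so non-minimax, and $Q\cong Y/\bar B$ acts on $\bar B$ by scalar multiplication from $(R/\mathfrak p)^{\times}$ (the action factoring through the abelian group $(R/\mathfrak p)^{\times}$).

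\emph{The forbidden quotient.} The key point is that $Y$ is virtually residually nilpotent while not minimax, contradicting $G\in\mathcal H$. Fix a maximal ideal $\mathfrak M$ of $R/\mathfrak p$ --- its residue field is finite, $R/\mathfrak p$ being a finitely generated $\mathbb Z$-algebra domain --- and choose a finite-index normal subgroup $Q_1$ of $Q$ that is residually nilpotent (possible, since polycyclic groups are virtually residually $p$) and lands in $1+\mathfrak M$ under $Q\to(R/\mathfrak p)^{\times}$; then the ideal $I_{Q_1}$ of $R/\mathfrak p$ generated by $\{\bar q-1:q\in Q_1\}$ is contained in $\mathfrak M$, hence proper, and since $R/\mathfrak p$ is a domain and $\bar B$ is torsion-free over it, $\bigcap_nI_{Q_1}^{\,n}\bar B=0$. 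A direct computation --- using that $\gamma_2(Q_1)$ acts trivially on $\bar B$ --- gives $\gamma_{k+1}(\bar B\rtimes Q_1)=I_{Q_1}^{\,k}\bar B\cdot\gamma_{k+1}(Q_1)$, so that $\gamma_\omega(\bar B\rtimes Q_1)=(\bigcap_nI_{Q_1}^{\,n}\bar B)\rtimes\gamma_\omega(Q_1)=1$; thus $\bar B\rtimes Q_1$ is residually nilpotent. As $\bar B\rtimes Q_1$ is (after passing, if necessary, to a finite-index subgroup over which the extension splits) a finite-index subgroup of $Y$, the group $Y$ is virtually residually nilpotent, and the contradiction shows $B$, hence $G/N'$, is minimax.

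\emph{Second assertion.} Now $G/N'$ is minimax. The group $G/[N',N]$ has the nilpotent (class $\le2$) normal subgroup $M:=N/[N',N]$ with $(G/[N',N])/M'=G/N'$ minimax, so $G/[N',N]$ is minimax by the quoted property of nilpotent normal subgroups; hence it suffices to show $C:=[N',N]\vartriangleleft G$ is minimax. From $G/N'$ and $G/[N',N]$ minimax one sees that $B=N/N'$ and $N'/C$ are minimax abelian groups, and since $N$ is metabelian each lower central factor $\gamma_n(N)/\gamma_{n+1}(N)$ is a quotient of $B\otimes_{\mathbb Z}(\gamma_{n-1}(N)/\gamma_n(N))$ --- inductively a quotient of a tensor product of minimax abelian groups, hence minimax. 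So $C$ is assembled from the minimax sections $\gamma_3/\gamma_4,\gamma_4/\gamma_5,\dots$ together with $\gamma_\omega(N)$; to control the latter one uses that $\gamma_\omega(N)\subseteq\gamma_\omega(G)$ and that $G/\gamma_\omega(G)$, being residually nilpotent, is minimax by $G\in\mathcal H$, and one reduces to the minimaxity of a residual abelian-by-(polycyclic) section of $G$ to which a variant of the module argument of the first assertion applies --- here the hypothesis that $N'$ has no $\pi$-torsion, $\pi=\mathrm{spec}(G/N')$, is exactly what rules out the ``bad torsion'' obstruction realised in the preceding Proposition (an infinite elementary abelian $p$-group, $p\in\pi$, on which $G/N'$ acts nontrivially and which becomes residually nilpotent over a finite-index subgroup, just as $\bar B\rtimes Q_1$ above). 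Assembling these pieces gives $C$, hence $N'$, hence $G$, minimax.

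\emph{The main obstacle.} The crux, in both assertions, is the passage from a non-minimax module section to an honest virtually residually nilpotent \emph{quotient} (not merely subquotient) of the ambient group: one must trim the abelian part correctly (passing to $B/\mathfrak pB$ modulo torsion above) and then locate a finite-index residually nilpotent subgroup of $Q$ whose augmentation ideal lands in a proper ideal of $R/\mathfrak p$, and one must deal with possible non-splitness of the relevant extensions. In the second assertion this is compounded by the fact that the coefficient ring is no longer Noetherian, so the reduction to a tractable module must be carried out by hand, and by the delicate bookkeeping needed to isolate precisely which torsion the hypothesis forbids and to show that forbidding it suffices --- getting that torsion analysis right is where the ``no $\pi$-torsion'' assumption does its work.
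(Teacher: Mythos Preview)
The paper does not prove this theorem: it is quoted from \cite{PS}, Theorem 3.2, and no argument is given here. The surrounding discussion, however, makes clear what the proof in \cite{PS} actually involves: one works over the \emph{commutative} group ring $\mathbb{Z}A$ of an abelian normal subgroup $A$ of the acting group, with the polycyclic quotient as a group of operators, and invokes the Hall--Roseblade module theory developed in \cite{S3} and \cite{S1}. Against that background, your proposal has a structural gap in each assertion.

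\textbf{First assertion.} You set $R=\mathbb{Z}[Q]$ with $Q=G/N$ polycyclic and then proceed as though $R$ were commutative: you speak of minimal primes of the support with ``residue domain'' $R/\mathfrak p$, of $(R/\mathfrak p)^{\times}$ being an \emph{abelian} group through which the $Q$-action factors, of maximal ideals of $R/\mathfrak p$ with finite residue field because $R/\mathfrak p$ is ``a finitely generated $\mathbb{Z}$-algebra domain'', and of $\bigcap_n I_{Q_1}^{\,n}\bar B=0$ by a Krull-intersection argument. None of this is available when $Q$ is non-abelian: $R/\mathfrak p$ is then only a prime ring, its unit group need not be abelian, simple quotients need not be finite, and the intersection statement has no general analogue. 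The actual route (as the paper indicates) is to pass to $\mathbb{Z}A$ for an abelian normal $A\le Q$ with $Q/A$ polycyclic and analyse the $\mathbb{Z}A$-module structure under the $Q/A$-action; producing from a non-minimax module a genuine virtually residually nilpotent \emph{quotient} of $G$ then requires the machinery of \cite{S3}, \cite{S1}, not a one-line Krull argument. Your ``forbidden quotient'' construction, as written, does not go through.

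\textbf{Second assertion.} Here you essentially acknowledge the gap yourself: after the legitimate reduction via the nilpotent-normal-subgroup fact to controlling $N'$, the treatment of $\gamma_\omega(N)$ and of the residual piece is a hand-wave (``one reduces to the minimaxity of a residual abelian-by-(polycyclic) section \ldots\ to which a variant of the module argument of the first assertion applies''). Since the first-assertion module argument is itself not in place, nothing is proved. Moreover the relevant acting group is now the minimax group $G/N'$, over which the group ring is not Noetherian, so the reduction to a tractable finitely generated module is a real piece of work --- this is exactly the content of \cite{PS}, Proposition 5.2 and the surrounding sections, and the paper's later Proposition \ref{propo} is devoted to generalising that step. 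Identifying that ``no $\pi$-torsion'' is the right hypothesis is correct in spirit, but you have not shown how it is used; in \cite{PS} it enters through a careful analysis of which finite module quotients can be discarded (those with a composition factor on which the centre of the Fitting subgroup acts trivially), and that analysis is where the spectrum condition does its work.
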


(For a minimax group $H$, $\mathrm{spec}(H)$ denotes the (finite) set of
primes $p$ such $C_{p^{\infty}}$ is a section of $H$.)

\medskip

From this, it is relatively straightforward to deduce

\begin{theorem}
\label{metab}\emph{(cf. \cite{PS}, Corollary 3.3)} Let $G$ be a finitely
generated group that is nilpotent-by-abelian-by-polycyclic. If $G\in
\mathcal{U}$ then $G$ has finite upper rank.
\end{theorem}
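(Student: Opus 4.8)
The plan is to make two reductions, apply Theorem~\ref{Hthm}, and then finish by running a ``relative'' version of the argument that proves Proposition~\ref{P0}. First I would reduce to the metabelian-by-polycyclic case: since $G$ is nilpotent-by-abelian-by-polycyclic it has a nilpotent normal subgroup $M$ with $G/M$ abelian-by-polycyclic, and then $G/M'$ is metabelian-by-polycyclic; by the ``reminder'' property that, whenever a group has a nilpotent normal subgroup $L$ with $G/L'$ of finite upper rank, the group itself has finite upper rank (applied with $L=M$), it suffices to prove that $G/M'$ has finite upper rank, so we may replace $G$ by $G/M'$. The new $G$ is f.g.\ and lies in $\mathcal{U}$ (a quotient of a group in $\mathcal{U}$), hence in $\mathcal{H}$; and $\mathcal{H}$ is closed under quotients, since a virtually residually nilpotent quotient of a quotient of a group in $\mathcal{H}$ is still such a quotient. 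Secondly, writing $R$ for the finite residual of $G$, we have $\mathcal{F}(G)=\mathcal{F}(G/R)$ and hence $\mathrm{ur}(G)=\mathrm{ur}(G/R)$, so we may replace $G$ by $G/R$ and assume in addition that $G$ is residually finite. Thus $G$ is now a finitely generated residually finite group in $\mathcal{H}$ possessing a metabelian normal subgroup $N$ with $G/N$ polycyclic.

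In this situation Theorem~\ref{Hthm} applies and gives that $G/N'$ is minimax. Put $A=N'$; since $N$ is metabelian, $A$ is an abelian normal subgroup of $G$ with $G/A$ minimax, so $r_{0}:=\mathrm{ur}(G/A)$ and $\pi:=\mathrm{spec}(G/A)$ are finite. It now suffices to produce a finite set of primes $\Sigma$ and a constant $d$ with $\mathrm{ur}_{q}(G)\leq d$ for all $q\notin\Sigma$: the remaining finitely many $\mathrm{ur}_{q}(G)$ with $q\in\Sigma$ are then finite because $G\in\mathcal{U}$, so $\sup_{q}\mathrm{ur}_{q}(G)<\infty$, and Lucchini's inequality makes $\mathrm{ur}(G)$ finite; unwinding the two reductions gives the theorem.

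To produce $\Sigma$ and $d$ I would regard $A$ as a module, under conjugation, over $\Lambda=\mathbb{Z}[\Gamma]$ where $\Gamma=G/C_{G}(A)$; as $A$ is abelian, $A\leq C_{G}(A)$, so $\Gamma$ is a quotient of the minimax group $G/A$ and is itself minimax. Then, exactly as in the proof of Proposition~\ref{P0}, I would want: (i) $A$ finitely generated as a $\Lambda$-module; (ii) the torsion subgroup $T$ of $A$ of finite exponent $e$; and (iii) a P.\ Hall ``generic freeness'' decomposition of $A/T$, namely a free abelian subgroup $F\leq A$ of finite rank $f$ with $A/TF$ a $\pi_{1}$-group for some finite set of primes $\pi_{1}$. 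Granting (i)--(iii), set $\Sigma=\pi\cup\pi_{1}\cup\mathrm{spec}(\Gamma)\cup\{p:p\mid e\}$ and $d=f+r_{0}$. For $q\notin\Sigma$ the computation from Proposition~\ref{P0} gives $A=FA^{q}$ and $A^{q}\cap F=F^{q}$, whence $\mathrm{r}_{q}(A/(A\cap K))\leq f$ for every subgroup $K$ of finite index in $G$; together with the elementary bound $\mathrm{r}_{q}(G/K)\leq\mathrm{r}_{q}(A/(A\cap K))+\mathrm{r}_{q}(G/AK)$ and with $\mathrm{r}_{q}(G/AK)\leq r_{0}$, this yields $\mathrm{ur}_{q}(G)\leq d$, as wanted.

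The place where I expect the real work to lie is exactly in (i)--(iii). The difference from Proposition~\ref{P0} is that $\Gamma$ is now only minimax, not polycyclic, so $\Lambda=\mathbb{Z}[\Gamma]$ need not be Noetherian and a finitely generated $\Lambda$-module need not satisfy the maximal condition; hence (ii) and (iii) are not available for free, and establishing (i) requires the finiteness properties, of P.\ Hall--Jategaonkar--Roseblade type, of finitely generated metabelian-by-polycyclic groups (used via the fact that $N$, being a normal subgroup of the finitely generated group $G$ with $G/N$ polycyclic, is finitely generated as a $G$-operator group, together with commutator calculus in the metabelian group $N$). For (ii) and (iii) the plan would be to reduce to a Noetherian situation: $\Gamma$ has a polycyclic subgroup $\Gamma_{0}$ with $\Gamma/\Gamma_{0}$ a $\rho$-group for a finite set of primes $\rho$ reflecting the finitely many quasicyclic sections of the minimax group $\Gamma$, the ring $\mathbb{Z}[\Gamma_{0}]$ \emph{is} Noetherian, $A$ is an extension of scalars to $\Lambda$ of a Noetherian $\mathbb{Z}[\Gamma_{0}]$-submodule, and the extra primes $\rho$ are absorbed into $\Sigma$. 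Everything else is bookkeeping modelled on Proposition~\ref{P0} and Theorem~\ref{Hthm}.
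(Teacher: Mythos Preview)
Your two initial reductions (to metabelian-by-polycyclic and then to residually finite) are fine and match the paper's ``we may assume that $G$ satisfies the hypotheses of Theorem~\ref{Hthm}''. The divergence comes after you obtain $A=N'$ with $G/A$ minimax. The paper does \emph{not} attempt any module theory over $\mathbb{Z}[\Gamma]$. Instead it uses the \emph{second} clause of Theorem~\ref{Hthm}: one manufactures a single quotient $G/D$ in which the image of $A$ has no $\pi$-torsion, by setting $D=\bigcap_{p\notin\pi,\;K\vartriangleleft_{f}G}KD_{p}(K)$ where $D_{p}(K)/(A\cap K)$ is the $p'$-part of $A/(A\cap K)$. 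By construction $\mathrm{ur}_{p}(G)=\mathrm{ur}_{p}(G/D)$ for every $p\notin\pi$, and Theorem~\ref{Hthm} now says $G/D$ is minimax outright, so $\mathrm{ur}_{p}(G)\leq\mathrm{ur}(G/D)<\infty$ for all $p\notin\pi$. No generic freeness, no Noetherianity, no finite generation of $A$ is needed.

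Your route, by contrast, tries to rerun the Proposition~\ref{P0} argument with $\Gamma=G/C_{G}(A)$ merely minimax, and this is where the plan is genuinely incomplete. First, your item~(i) is not clear: $A=N'$ need not be finitely generated as a $G$-module (equivalently as a $\Gamma$-module); the ``commutator calculus in the metabelian group $N$'' you allude to produces $A$ as the span of $[n_i^{g},n_j^{h}]$ with $g,h$ ranging over $G$, and there is no evident reason these collapse to finitely many $G$-orbits. Second, your item~(iii) is exactly the obstacle the paper highlights later: generic freeness \emph{fails} once $\Gamma$ is only minimax (see the remark citing Kropholler--Lorensen), so the reduction ``$A$ is an extension of scalars from a Noetherian $\mathbb{Z}[\Gamma_{0}]$-submodule'' does not go through as stated --- $A$ has no reason to be finitely generated over $\mathbb{Z}[\Gamma_{0}]$. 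You also overlook the one structural fact that would help here, namely that $\Gamma$ is not just minimax but abelian-by-polycyclic (since $G/A$ is); this is precisely the special case in which the Hall--Roseblade machinery of \cite{S1},\cite{S3} can be made to work, but exploiting it amounts to redoing the hard part of Theorem~\ref{Hthm} rather than quoting it. The paper's $D$-construction is both shorter and avoids all of these issues.
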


\begin{proof}
We may assume that $G$ satisfies the hypotheses of Theorem \ref{Hthm}. Keeping
the notation there, put $A=N^{\prime}$, an abelian normal subgroup of $G$. For
a prime $p$ and $K\vartriangleleft_{f}G$ let $D_{p}(K)/(A\cap K)$ be the
$p^{\prime}$-component of the finite abelian group $A/(A\cap K)$. Then%
\[
\mathrm{r}_{p}(G/K)=\mathrm{r}_{p}(G/KD_{p}(K)).
\]
So if we set%
\[
D=\bigcap_{\substack{p\notin\pi\\K\vartriangleleft_{f}G}}KD_{p}(K),
\]
we have $\mathrm{ur}_{p}(G)=\mathrm{ur}_{p}(G/D)$ for all $p\notin\pi$.

Now $AD/D\cong A/(A\cap D)$ has no $\pi$-torsion, since each $A/(A\cap
KD_{p}(K))$ is a $p$-group. Clearly $G/D$ is residually finite, so Theorem
\ref{Hthm} applies to show that $G/D$ is a minimax group. Hence%
\[
\mathrm{ur}_{p}(G)=\mathrm{ur}_{p}(G/D)\leq\mathrm{ur}(G/D)<\infty
\]
for every $p\notin\pi$, and as $\pi$ is finite it follows that $\mathrm{ur}%
_{p}(G)$ is bounded over all primes $p$.
\end{proof}

\bigskip

The hypotheses in Theorem \ref{metab} seem rather restrictive. However, if we
could only replace `nilpotent-by-abelian' with `abelian-by-nilpotent' then we
could deduce the full force of Conjecture A; this is explained below.

\bigskip

\textbf{Modules of finite upper rank}

\medskip

Let $G$ be a counterexample to Conjecture A of least possible derived length,
$l$; we may assume that $G$ is residually finite. Let $A$ be maximal among
abelian normal subgroups of $G$ that contain $G^{(l-1)}$. Then $G/A$ is
residually finite (by an elementary lemma) and has finite upper rank, so $G/A$
is a minimax group. In particular, $G/A$ is virtually nilpotent-by-abelian and
so $G$ is abelian-by-nilpotent-by-polycyclic: this is the point of the final
remark in the preceding section.

Putting $\Gamma=G/A$ we consider $A$ as a $\Gamma$-module, written additively
as $A_{\Gamma}$.

If $B$ is a submodule of finite index in $A_{\Gamma}$ then $G/B$ is residually
finite (because $\mathcal{S}$ is extension-closed), whence%
\[
\mathrm{r}_{p}(A/B)\leq\mathrm{ur}_{p}(G)
\]
for each prime $p$; and it is clear that%
\[
\mathrm{ur}(G/B)\leq\mathrm{r}(A/B)+\mathrm{ur}(G/A).
\]

Let us define the upper rank of a $\Gamma$-module $M$ by $\mathrm{ur}%
(M)=\sup\{\mathrm{r}(M/B)\mid B\leq_{\Gamma}M,~M/B$ finite$\},$ and set%
\begin{align*}
\mathrm{ur}_{p}(M)  &  =\sup\{\mathrm{r}(M/B)\mid pM\leq B\leq_{\Gamma
}M,~M/B\text{ finite}\}\\
&  =\mathrm{ur}(M/pM).
\end{align*}
I will say that $M$ is a \emph{quasi-f.g.} $\Gamma$-module if there exists a
f.g. group $G$ that is an extension of $M$ by $\Gamma$. The preceding
observations now show that $A_{\Gamma}$ is a counterexample to

\bigskip

\textbf{Conjecture B} \emph{Let }$\Gamma$ \emph{be a f.g.\ residually finite
soluble minimax group and let }$M$ \emph{be a} \emph{quasi-f.g.} $\Gamma
$\emph{-module. If }$\mathrm{ur}_{p}(M)$ \emph{is finite for every prime} $p$
\emph{then} $M$ \emph{has finite upper rank.}

\medskip

Conversely, it is easy to see that if $M$ is a counterexample to Conjecture B
then the corresponding extension $G$ is a counterexample to Conjecture A. So
the two conjectures are equivalent.

Theorem \ref{metab} establishes Conjecture B for the special case where
$\Gamma$ is abelian-by-polycyclic. A reduction step in the proof is
Proposition 5.2 of \cite{PS}, which shows that $M$ contains a finitely
generated $\Gamma$-submodule $B$ such that the finite module quotients of $B$
are `nearly all' isomorphic to finite quotients of $M,$ and conversely. The
hypothesis that $\Gamma$ is abelian-by-polycyclic is used in the proof of this
reduction, but can be dispensed with; this is explained in the next section.
The main part of the proof, however, does depend on $\Gamma$ having an abelian
normal subgroup $A$ such that $\Gamma/A$ is polycyclic. Following a strategy
devised by P. Hall \cite{H} and further developed by Roseblade \cite{R}, one
examines the structure of $B$ as a module for the group ring $\mathbb{Z}A$,
with $\Gamma/A$ as a group of operators. The necessary module theory is
developed in \cite{S3} and \cite{S1}.

For the general case of Conjecture B, it would seem necessary to generalize
this machinery in one of two directions: either allow $A$ to be nilpotent
(rather than abelian), or allow $\Gamma/A$ to be minimax (rather than
polycyclic -- while still assuming $\Gamma/C_{\Gamma}(A)$ to be polycyclic, if
one takes $A$ inside the centre of the Fitting subgroup of $\Gamma$). Whether
either of these approaches is feasible remains unclear. Machinery relevant to
the first approach has been developed by Tushev \cite{T}. A major difficulty
with the second approach is the fact that the `generic freeness' property
mentioned above definitely fails when $\Gamma/A$ is not polycyclic, as
observed by Kropholler and Lorensen in \cite{KL}, Cor. 5.6. Other aspects of
the Hall-Roseblade theory have been usefully generalized by Brookes \cite{B}.

On the other hand, if one is seeking a counterexample to conjecture B, the
simplest candidate would seem to be the following group: Let $K$ be the
Heisenberg group over $\mathbb{Z}[1/2]$ and take $\Gamma=K\rtimes\left\langle
t\right\rangle $ where $t$ acts on a matrix by doubling the off-diagonal
entries (and multiplying the top right corner entry by $4$). Then $M$ could be
the quotient $\mathbb{Z}\Gamma/J$ where $J$ is a carefully constructed right
ideal: generators of $J$ should be chosen to ensure that $\mathbb{Z}\Gamma/J$
has finite upper $p$-rank for each prime $p$, but in such a way that these
ranks are unbounded.

\bigskip

\textbf{A possible reduction: quasi-f.g. modules.}

\bigskip

Let $\Gamma$ be a f.g.\ residually finite soluble minimax group. Then $\Gamma$
has a nilpotent normal subgroup $K$ such that $\Gamma/K$ is virtually abelian.
We fix a normal subgroup $Z$ of $\Gamma$ with $Z\leq\mathrm{Z}(K),$ and let
$R=\mathbb{Z}Z$ denote its group ring. For a multiplicatively closed subset
$\Lambda$ of $R$, an $R$-module $M$ is said to be $\Lambda$\emph{-torsion} if
every element of $M$ is annihilated by some element of $\Lambda$.

\begin{proposition}
\label{propo}Let $A$ be a quasi-f.g. $\Gamma$-module. Then $A$ has a finitely
generated $\Gamma$-submodule $B$ such that $A/B$ is $\Lambda$-torsion for each
$\Lambda$ of the form $R\smallsetminus L$ where $L$ is a maximal ideal of
finite index in $R$ not containing the augmentation ideal $(Z-1)R$.
\end{proposition}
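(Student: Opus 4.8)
The plan is to exploit the definition of quasi-f.g.: there is a f.g. group $G$ that is an extension of $A$ by $\Gamma$, so $A$ is generated as a $\Gamma$-module by the images of finitely many generators of $G$; call the submodule they generate $B_0$. This $B_0$ need not yet have the torsion property, so the real work is to enlarge $B_0$ to a finitely generated $B$ with $A/B$ forced to be $\Lambda$-torsion for every admissible $\Lambda$. First I would pass to the ring $R=\mathbb{Z}Z$ and view $A$, $B_0$ and $A/B_0$ as $R$-modules with $\Gamma/Z$ acting by conjugation. The key structural input is that $\Gamma$ is a soluble minimax group, so $R=\mathbb{Z}Z$ is a commutative Noetherian ring of finite Krull dimension whose only maximal ideals of finite index containing the augmentation ideal correspond to the finitely many primes in $\mathrm{spec}(\Gamma)$ — and crucially, the f.g. group $G$ forces $A$ to be a Noetherian $\mathbb{Z}\Gamma$-module, or at least "quasi-Noetherian" in the sense that there is a uniform bound controlling finite quotients.

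The main step: I would argue that $A/B_0$, as a $\Gamma$-module, can have only finitely many associated primes $L_1,\dots,L_k$ of $R$ that are not of the excluded type (i.e. not containing $(Z-1)R$), because each such $L_i$ of finite index, being $\Gamma$-invariant under the conjugation action, must be one of finitely many $\Gamma$-orbits — and here is where I expect the genuine difficulty to lie: controlling which maximal ideals of $R$ of finite index can appear. For each bad candidate prime $L$ with $R\smallsetminus L = \Lambda$, the $\Lambda$-torsion-free part of $A/B_0$ is a nonzero module that, localized at $L$, gives a nonzero $R_L$-module; I would then use the fact that $G$ is finitely generated to produce, for each of the finitely many relevant $L$, a single element $a_L \in A$ whose image generates enough of the $L$-local structure, and adjoin the $\Gamma$-submodules $\langle a_L\rangle$ to $B_0$. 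The resulting $B$ is still finitely generated (finitely many generators added to a f.g. module), and by construction $A/B$ has no associated primes among the admissible $L$, hence $(A/B)_L = 0$ for each such $L$, which is exactly the assertion that $A/B$ is $(R\smallsetminus L)$-torsion.

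The hard part will be establishing that only finitely many admissible maximal ideals $L$ of finite index are "relevant", i.e. that $A/B_0$ is supported (away from the augmentation ideal) on a finite set of $\Gamma$-orbits of such primes. I would attack this by invoking the Noetherian property of $A$ over $\mathbb{Z}\Gamma$ (which follows from $G$ being f.g. — every f.g. subgroup of $A$ is contained in a f.g. $\Gamma$-submodule, and $\Gamma$ being soluble minimax, these satisfy a chain condition) together with a Hall-type argument that a f.g. $\mathbb{Z}\Gamma$-module which is $R$-torsion has annihilator of finite index in $R$ when restricted appropriately, so only finitely many $\Gamma$-stable maximal ideals arise. An alternative, and perhaps cleaner, route is to reduce modulo each prime $p$: since we only care about finite module quotients, and these factor through $A/pA$ for various $p$, one can work over $\mathbb{F}_pZ$, where the finiteness of the relevant prime spectrum is more transparent because $\mathbb{F}_p Z$ for $Z$ minimax is a localization of an affine $\mathbb{F}_p$-algebra of finite transcendence degree. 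Either way, once the finiteness of relevant $L$'s is in hand, the enlargement of $B_0$ to $B$ is routine.
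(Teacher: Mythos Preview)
Your approach has a fundamental obstruction: the ring $R=\mathbb{Z}Z$ is \emph{not} Noetherian. Here $Z$ is the centre of the Fitting subgroup of a soluble minimax group, so $Z$ is abelian minimax but typically not finitely generated (think $Z\cong\mathbb{Z}[1/p]$); its integral group ring is a non-Noetherian direct limit of Laurent polynomial rings. Consequently the machinery of associated primes and localisation you invoke is not available, and more to the point there is no finiteness of ``relevant'' maximal ideals: for every prime $p$ and every nontrivial character $Z\to\overline{\mathbb{F}}_p^{\times}$ you get an admissible $L$, and these are genuinely infinite in number. Your plan to adjoin one element $a_L$ per bad prime therefore cannot terminate. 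The related claim that $A$ is Noetherian over $\mathbb{Z}\Gamma$ fails for the same reason: $\mathbb{Z}\Gamma$ is not Noetherian when $\Gamma$ is minimax but not polycyclic, and indeed the very point of the proposition is that a quasi-f.g.\ module need not be finitely generated as a $\Gamma$-module. (Your opening line about $B_0$ is also confused: generators of $G$ need not lie in $A$.)

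The paper's argument is entirely different in character and does not use commutative algebra at all. One fixes a \emph{finite} subset $Y_1\subseteq Z$ such that $Z/\langle Y_1\rangle$ is divisible (this exists precisely because $Z$ is minimax), and then proves a key lemma: for every admissible $L$ and every $g\in E$, some element of $Y_1$ lies in $\Lambda^g+1$. This holds because the kernel $D=Z\cap(L+1)$ has finite index in $Z$, so if $D^g$ contained $Y_1$ then $Z/D^g$ would be a finite quotient of the divisible group $Z/\langle Y_1\rangle$, forcing $D=Z$ and $L\supseteq(Z-1)R$. The submodule $B$ is then defined \emph{uniformly}, independent of $L$, as the $E$-submodule generated by the finitely many commutators $[x,y]$ and $[x^s,y]$ with $x\in X$, $y\in Y$, $s\in S$ (finite generating sets chosen appropriately). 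The $\Lambda$-torsion property of $A/B$ is established by explicit commutator calculus---repeated use of the Hall--Witt identity to rewrite iterated commutators---filtered through the nilpotency of $K_1/A$. The essential idea you are missing is that a single finite $B$ must serve for all $\Lambda$ at once, and this is achieved not by spectral finiteness but by exploiting divisibility of $Z/\langle Y_1\rangle$ together with group-theoretic commutator manipulations.
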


Before giving the proof we note a corollary. For a $\Gamma$-module $M,$ let
$\mathcal{F}(M)$ denote the set of isomorphism types of finite quotient
$\Gamma$-modules of $M$.

\begin{corollary}
For $A$ and $B$ as above, we have%
\[
\mathcal{F}(A)\smallsetminus\mathcal{S}=\mathcal{F}(B)\smallsetminus
\mathcal{S}%
\]
where $\mathcal{S}$ consists of the finite $\Gamma$-modules that have a
composition factor on which $Z$ acts trivially.
\end{corollary}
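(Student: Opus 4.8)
The assertion consists of the two inclusions $\mathcal F(A)\smallsetminus\mathcal S\subseteq\mathcal F(B)\smallsetminus\mathcal S$ and $\mathcal F(B)\smallsetminus\mathcal S\subseteq\mathcal F(A)\smallsetminus\mathcal S$. The plan is to reinterpret ``$V\in\mathcal S$'' as a condition on the support of $V$ over $R=\mathbb{Z}Z$, and then to use Proposition \ref{propo}, via localisation, to show that every finite quotient $\Gamma$-module of $A/B$ lies in $\mathcal S\cup\{0\}$. Call a maximal ideal $L$ of $R$ \emph{admissible} if it has finite index and does not contain $(Z-1)R$; thus, by Proposition \ref{propo}, $A/B$ and every quotient of it is $(R\smallsetminus L)$-torsion for each admissible $L$.

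I would first establish the dictionary. For a finite $\Gamma$-module $V$, viewed as an $R$-module, $R/\mathrm{Ann}_R(V)$ is a finite ring, so $V$ is Artinian over $R$ and $\mathrm{Supp}_R(V)$ is a finite set of maximal ideals of finite index, namely those $L$ with $R/L$ an $R$-composition factor of $V$. Since $Z\vartriangleleft\Gamma$, Clifford's theorem makes the restriction to $Z$ of any $\Gamma$-composition factor of $V$ semisimple; together with the conjugation-transitivity of the $Z$-constituents of such a factor this yields: $Z$ acts trivially on some $\Gamma$-composition factor of $V$ if and only if $(Z-1)R\subseteq L$ for some $L\in\mathrm{Supp}_R(V)$. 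Hence $V\notin\mathcal S$ exactly when every $L\in\mathrm{Supp}_R(V)$ is admissible. For such a $V$ put $\Lambda_V=R\smallsetminus\bigcup_{L\in\mathrm{Supp}_R(V)}L$: this is a $\Gamma$-stable multiplicatively closed set (as $\mathrm{Ann}_R(V)$, hence $\mathrm{Supp}_R(V)$, is $\Gamma$-stable), every element of it is a unit modulo $\mathrm{Ann}_R(V)$ and so acts invertibly on $V$, and $A/B$ is $\Lambda_V$-torsion --- for any $x\in A/B$ one has $\mathrm{Ann}_R(x)\not\subseteq L$ for each (admissible) $L\in\mathrm{Supp}_R(V)$, hence, by prime avoidance, $\mathrm{Ann}_R(x)\not\subseteq\bigcup L$. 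The promised fact follows: if a finite quotient $\Gamma$-module $W$ of $A/B$ were not in $\mathcal S$, then $\Lambda_W$ would act invertibly on $W$ while $W$, being a quotient of $A/B$, would be $\Lambda_W$-torsion, forcing $W=0$.

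Now the two inclusions. For $\mathcal F(A)\smallsetminus\mathcal S\subseteq\mathcal F(B)\smallsetminus\mathcal S$: given $V=A/A_0\notin\mathcal S$, the module $U=A/(B+A_0)$ is a quotient of $A/B$, hence lies in $\mathcal S\cup\{0\}$; but as a quotient of $V$ it has all its composition factors among those of $V$, so it is not in $\mathcal S$ unless $U=0$. Therefore $U=0$, i.e.\ $A=B+A_0$, so $V\cong B/(B\cap A_0)\in\mathcal F(B)$, and it is still not in $\mathcal S$. For $\mathcal F(B)\smallsetminus\mathcal S\subseteq\mathcal F(A)\smallsetminus\mathcal S$: given $V=B/B_0\notin\mathcal S$, regard $B_0$ as a $\Gamma$-submodule of $A$ and localise the exact sequence $0\to V\to A/B_0\to A/B\to0$ at $\Lambda_V$; since $A/B$ is $\Lambda_V$-torsion and $\Lambda_V$ acts invertibly on $V$, this gives $(A/B_0)[\Lambda_V^{-1}]\cong V$, and the localisation map $A/B_0\to(A/B_0)[\Lambda_V^{-1}]\cong V$ is a $\Gamma$-homomorphism which is surjective, because any $a/\mu$ in the localisation (with $a\in A/B_0$, $\mu\in\Lambda_V$) equals $\nu\cdot(a/1)$ for any $\nu\in R$ with $\nu\mu\equiv1$ modulo $\mathrm{Ann}_R(V)$. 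Its kernel is $C/B_0$ for some $C\leq_\Gamma A$ with $A/C\cong V$, so $V\in\mathcal F(A)\smallsetminus\mathcal S$.

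There is no deep obstacle here --- the corollary is a formal consequence of Proposition \ref{propo} --- but one point needs care: $R=\mathbb{Z}Z$ may fail to be Noetherian, since $Z$ is only minimax, so one must never localise or argue on $A$ or $A/B$ directly, only on the \emph{finite} modules in $\mathcal F(A)\cup\mathcal F(B)$, whose $R$-supports are finite sets of finite-index maximal ideals. With that in place the step ``$(R\smallsetminus L)$-torsion for each admissible $L$ $\Longrightarrow$ $\Lambda_V$-torsion'' is just prime avoidance, and the surjectivity of the localisation map is denominator-clearing. The remaining delicate point is the Clifford-theoretic identification of $\mathcal S$ with the support condition, which is exactly where the hypothesis that $L$ avoid $(Z-1)R$ enters, and is why $\mathcal F(A)$ and $\mathcal F(B)$ can only be expected to agree modulo $\mathcal S$.
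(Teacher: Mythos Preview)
Your proof is correct and follows essentially the same route as the paper's. The paper gives only a one-sentence sketch, asserting that the $\Lambda$-torsion property of $A/B$ yields the two identities $AJ+B=A$ and $AJ\cap B=BJ$ for $J=\mathrm{Ann}_R(V)$ whenever $V\notin\mathcal S$, from which both inclusions follow by direct module manipulations; you instead package the same content in the language of localisation at $\Lambda_V=R\smallsetminus\bigcup_{L\in\mathrm{Supp}_R(V)}L$. Your version is more explicit---in particular you spell out the Clifford-theoretic identification of $\mathcal S$ with the support condition and the prime-avoidance step, neither of which the paper mentions---but the underlying mechanism (that $A/B$ becomes invisible once one inverts $\Lambda_V$, equivalently that $(A/B)J=A/B$) is identical. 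One minor remark: your surjectivity argument for the localisation map is fine as written, since you have already identified $(A/B_0)[\Lambda_V^{-1}]$ with $V$, so $\mathrm{Ann}_R(V)$ acts by zero there and the denominator-clearing goes through; an even shorter justification is that the composite $V\hookrightarrow A/B_0\to(A/B_0)[\Lambda_V^{-1}]$ is already an isomorphism.
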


This is essentially a formal consequence of the stated condition on $\Lambda
$-torsion, which implies that%
\[
AJ+B=A,~~AJ\cap B=BJ
\]
whenever $J$ is the annihilator in $R$ of some finite $\Gamma$-module not in
$\mathcal{S}$. Thus questions about the upper ranks of $A$ might be reduced to
questions about the upper ranks of the finitely generated module $B,$ if - by
some subsidiary argument - one could leave aside the quotients lying in
$\mathcal{S}$ (this is in principle the approach taken in \cite{PS}, \S \S 5, 6).

To establish the Proposition, we consider a f.g. group $E$ with an abelian
normal subgroup $A$ such that $E/A=\Gamma$. In $E$ there is a series of normal
subgroups%
\[
E\vartriangleright K_{1}\geq Z_{1}\geq A\geq\gamma_{c+1}(K_{1})[Z_{1},K_{1}]
\]
where $K_{1}/A=K$ is nilpotent of class $c$, say, and $Z_{1}/A=Z$. Now $Z$ is
an abelian minimax group, hence contains a finite subset $Y_{1}$ such that
$Z/\left\langle Y_{1}\right\rangle $ is divisible. Since $E/K_{1}\cong%
\Gamma/K$ is virtually abelian and $E$ is f.g., $K_{1}$ is finitely generated
as a normal subgroup of $E$; we choose a finite set $X=X^{-1}$ of normal
generators for $K_{1}$ and assume that $X$ contains a set $Y$ of
representatives for the elements of $Y_{1}$. Finally, let $S=S^{-1}$ be a
finite set of generators for $E$.

\begin{lemma}
\label{lambda}Let $L$ be a maximal ideal of finite index in $R=\mathbb{Z}Z$
not containing $Z-1.$ Then $\Lambda=R\smallsetminus L$ satisfies%
\begin{equation}
\left(  \Lambda^{g}+1\right)  \cap Y_{1}\neq\varnothing\label{inter}%
\end{equation}
for every $g\in E$.
\end{lemma}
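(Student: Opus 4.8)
The plan is to argue by contradiction: suppose that $(\Lambda^{g}+1)\cap Y_{1}=\varnothing$ for some $g\in E$, and derive a contradiction with the fact that $Z/\langle Y_{1}\rangle$ is divisible. First I would set up the action. Since $Z_{1}$ and $A$ are normal in $E$, conjugation gives an action of $E$ on $Z=Z_{1}/A$ by group automorphisms, and hence an action on $R=\mathbb{Z}Z$ by ring automorphisms; write $r\mapsto r^{g}$ for the automorphism induced by $g$, and $L^{g}$ for the image of $L$. Because $z\mapsto z^{g}$ is a permutation of $Z$, we have $(Z-1)^{g}=Z-1$ inside $R$, so from $Z-1\not\subseteq L$ we get $Z-1\not\subseteq L^{g}$ as well. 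As $\Lambda^{g}=R\smallsetminus L^{g}$, the assumption $(\Lambda^{g}+1)\cap Y_{1}=\varnothing$ translates into: $y-1\in L^{g}$ for every $y\in Y_{1}$.

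The next step is to convert this into a statement about subgroups of $Z$. Set $K_{g}=\{z\in Z\mid z-1\in L^{g}\}$. Using that $L^{g}$ is an ideal and the identities $zw-1=z(w-1)+(z-1)$, $z^{-1}-1=-z^{-1}(z-1)$, one sees that $K_{g}$ is a subgroup of $Z$; by the previous paragraph $Y_{1}\subseteq K_{g}$, so $\langle Y_{1}\rangle\leq K_{g}$ and therefore $Z/K_{g}$, being a quotient of $Z/\langle Y_{1}\rangle$, is divisible. On the other hand $L$ is a maximal ideal of finite index, so $R/L^{g}\cong R/L$ is a finite field, and $z\mapsto z+L^{g}$ is a homomorphism $Z\to(R/L^{g})^{\times}$ with kernel precisely $K_{g}$; hence $Z/K_{g}$ embeds in the finite group $(R/L^{g})^{\times}$, so it is finite, and it is nontrivial since $Z-1\not\subseteq L^{g}$. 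But a nontrivial finite abelian group is not divisible, which gives the contradiction; thus $(\Lambda^{g}+1)\cap Y_{1}\neq\varnothing$ for every $g\in E$.

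I do not anticipate a serious difficulty: the mathematical content reduces to the single remark that no proper subgroup of finite index in $Z$ can contain $\langle Y_{1}\rangle$, since $Z/\langle Y_{1}\rangle$ is divisible whereas such a subgroup produces a nontrivial finite quotient of $Z$. The step that needs to be carried out with some care is the first one --- checking that conjugation by $g\in E$ genuinely descends to a ring automorphism of $R$ that stabilises the augmentation ideal $(Z-1)R$, so that the hypothesis imposed on $L$ is inherited by every conjugate $L^{g}$; this is exactly where the choice of $Z_{1}$ with $Z_{1}/A=Z$ normal in $E$ is used.
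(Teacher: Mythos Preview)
Your proof is correct and follows essentially the same route as the paper's: your subgroup $K_{g}=\{z\in Z\mid z-1\in L^{g}\}$ is exactly the paper's $D^{g}$ where $D=Z\cap(L+1)$, and both arguments conclude by noting that a subgroup of finite index in $Z$ containing $Y_{1}$ must equal $Z$ because $Z/\langle Y_{1}\rangle$ is divisible. You have simply spelled out in more detail the points the paper leaves implicit (that $K_{g}$ is a subgroup, that $Z/K_{g}$ embeds in $(R/L^{g})^{\times}$, and that the $E$-action on $R$ preserves the augmentation ideal).
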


\begin{proof}
Write $D=Z\cap(L+1)$. If (\ref{inter}) fails for $g$ then $D^{g}\supseteq
Y_{1}$ which implies $D^{g}=Z$ since $Z/\left\langle Y_{1}\right\rangle $ is
divisible while $\left\vert Z:D^{g}\right\vert $ is finite. Hence $D=Z$ and so
$L\supseteq Z-1$.
\end{proof}

\bigskip

Now we define $B$ to be the $E$-submodule of $A$ generated by the finite set%
\[
\left\{  \lbrack x,y],~[x^{s},y]\mid x\in X,~y\in Y,~s\in S\right\}  .
\]
We aim to show that if $\Lambda$ is a multiplicatively closed subset of $R$
satisfying (\ref{inter}) for every $g\in E$, then the $R$-module $A/B$ is
$\Lambda$-torsion; with Lemma \ref{lambda} this will complete the proof of
Proposition \ref{propo}.

Note that $\gamma_{i+1}(K_{1})$ is generated by the elements $v_{i}%
(\mathbf{x},\mathbf{w})^{g}$ for $g\in E$ and%
\[
v_{i}(\mathbf{x},\mathbf{w})=[x_{0},x_{1}^{w_{1}},\ldots,x_{i}^{w_{i}}],
\]
$x_{j}\in X,~w_{j}\in E$. Put%
\begin{align*}
A_{i}  &  =\left\langle [v_{i}(\mathbf{x},\mathbf{w}),z]^{g}\mid x_{j}\in
X,~z\in Y,~g,w_{j}\in E\right\rangle ,\\
B_{i}  &  =\left\langle [x^{v},y]^{g}\mid x\in X,~y\in Y,~g,v\in E,~l(v)\leq
i\right\rangle
\end{align*}
where $l(v)$ denotes the least $n$ such that $v=s_{1}\ldots s_{n}$ ($s_{j}\in
S$). Note that $B_{1}=B$.

\medskip\emph{Claim:} $A/A_{c}$ is $\Lambda$-torsion.

\medskip To see this, choose $y\in Y$ with $\overline{y}-1\in\Lambda$ where
$\overline{y}=Ay$. Then (mixing additive and multiplicative notation)%
\[
A(\overline{y}-1)^{c}=[A,_{c}y]\subseteq\gamma_{c+1}(K_{1})\subseteq A.
\]
Given a generator $v_{c}(\mathbf{x},\mathbf{w})^{g}$ of $\gamma_{c+1}(K_{1})$,
choose $z\in Y$ such that $\overline{z}^{g}-1\in\Lambda$. Then
\[
v_{c}(\mathbf{x},\mathbf{w})^{g}(\overline{z}^{g}-1)=[v_{c}(\mathbf{x}%
,\mathbf{w}),z]^{g}\in A_{c}.
\]
\medskip

\emph{Claim}: For $i>1$, $B_{i}/B_{i-1}$ is $\Lambda$-torsion.

\medskip To see this, say $b=[x^{\gamma u},y]^{g}$ is a generator of $B_{i}$
where $l(u)\leq i-1$. Choose $z\in Y$ such that $\overline{z}^{ug}-1\in
\Lambda$. Then%
\begin{align*}
(b(\overline{z}^{ug}-1))^{-g^{-1}y}  &  =[x^{\gamma u},y,z^{u}]^{-y^{-1}}\\
&  =[z^{u},x^{-\gamma u},y^{-1}]^{x^{\gamma u}}+[y^{-1},z^{-u},x^{\gamma
u}]^{z^{u}}.
\end{align*}
The first term lies in $B_{1}\leq B_{i-1}$ and the second term lies in
$B_{i-1}$. The claim follows since $B_{i-1}$ is $E$-invariant.

\medskip\emph{Claim}: Write $B_{\infty}=\bigcup_{j}B_{j}$. Then for $i>1$,
$A_{i}\subseteq B_{\infty}+A_{i-1}$.

\medskip To see this, let $x=(\mathbf{x}^{\prime},x)$ and $\mathbf{w}%
=(1,w_{1},\ldots)=(\mathbf{w}^{\prime},w)$ be $(i+1)$-tuples in $X,~E$
respectively, and let $z\in Y$. Then%
\begin{align*}
\lbrack v_{i}(\mathbf{x},\mathbf{w}),z]^{-x^{-w}}  &  =[v_{i-1}(\mathbf{x}%
^{\prime},\mathbf{w}^{\prime}),x^{w},z]^{-x^{-w}}\\
&  =[x^{w},z^{-1},v_{i-1}(\mathbf{x}^{\prime},\mathbf{w}^{\prime}%
)]^{z}+[z,v_{i-1}(\mathbf{x}^{\prime},\mathbf{w}^{\prime})^{-1},x^{w}%
]^{v_{i-1}(\mathbf{x}^{\prime},\mathbf{w}^{\prime})}.
\end{align*}
The first term lies in $B_{\infty}$ and the second term lies in $A_{i-1}$. The
claim follows since each of these modules is $E$-invariant.

The three claims together now imply that $A/B$ is $\Lambda$-torsion, and the
proof is complete.

\bigskip

\textbf{Further reductions}

\bigskip

Suppose that the pair $(\Gamma,M)$ furnishes a counterexample to Conjecture B,
where $M$ is finitely generated as a $\Gamma$-module. With quite a lot of
extra work, generalizing some ideas from \cite{S3}, one can establish

\begin{proposition}
The module $M$ has a torsion-free residually finite quotient $\widetilde{M}$
of infinite upper rank such that every proper, $\pi$-torsion-free residually
finite quotient of $\widetilde{M}$ has finite rank, where $\pi=\mathrm{spec}%
(\Gamma)$.
\end{proposition}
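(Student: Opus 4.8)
The plan is to obtain $\widetilde M$ by a minimal-counterexample argument inside the quotients of $M$: after a preliminary reduction making the module $\pi$-torsion-free, one picks — among the $\pi$-torsion-free residually finite quotients of infinite upper rank — one that is maximal with respect to further such quotienting, and then one verifies the remaining assertions. The preliminary reduction is where the finiteness of $\pi=\mathrm{spec}(\Gamma)$ first bites: since $\pi$ is finite and each $\mathrm{ur}_p(M)$ is finite, $\sup_{p\in\pi}\mathrm{ur}_p(M)<\infty$, so by Lucchini's inequality the infinitude of $\mathrm{ur}(M)$ is already witnessed by the primes outside $\pi$; and if $T$ is the $\pi$-torsion submodule of $M$ then, for $p\notin\pi$, multiplication by $p$ is an automorphism of $T$, so $T\subseteq pM$ and $M/pM\cong(M/T)/p(M/T)$, whence $\mathrm{ur}_p(M/T)=\mathrm{ur}_p(M)$ for all such $p$ and $\mathrm{ur}(M/T)=\infty$. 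Thus $M/T$ is a $\pi$-torsion-free quotient of infinite upper rank; one also needs it to be residually finite, which (as for the further quotients below) requires a short but not entirely formal verification, using the structure of $\Gamma$ and Proposition \ref{propo}. Replacing $M$ by $M/T$, we may assume $M$ is $\pi$-torsion-free.

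Now let $\mathcal C$ be the set of $\Gamma$-submodules $N$ of $M$ with $M/N$ residually finite, $\pi$-torsion-free and of infinite upper rank; then $0\in\mathcal C$. The aim is a maximal element $N\in\mathcal C$, and then $\widetilde M=M/N$. Maximality gives the minimality clause directly: a proper $\pi$-torsion-free residually finite quotient of $\widetilde M$ is $M/N'$ with $N'\supsetneq N$, and if it had infinite upper rank it would contradict maximality. It remains to deduce that $\widetilde M$ is torsion-free. If the torsion submodule $T'$ of $\widetilde M$ (which is $\pi'$-torsion) were nonzero, then $\widetilde M/T'$ would be a proper, torsion-free, residually finite quotient, hence of finite upper rank $C$, say; and since for each $p\notin\pi$ the $q$-primary parts of $T'$ for $q\neq p$ lie in $p\widetilde M$, subadditivity of $\mathrm{ur}$ along $0\to T'\to\widetilde M\to\widetilde M/T'\to0$ modulo $p$ gives $\mathrm{ur}_p(\widetilde M)\le\mathrm{ur}_p(T')+C$. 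To contradict $\mathrm{ur}(\widetilde M)=\infty$ one must bound $\mathrm{ur}_p(T')$ uniformly over $p$; this is not automatic, and is where one again invokes Proposition \ref{propo} and the module theory of \cite{S3} to confine the torsion of $\widetilde M$ to finitely many primes with bounded ranks.

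The genuine difficulty is producing a maximal element of $\mathcal C$. Because $\Gamma$ is minimax but, in the cases that matter, not polycyclic, $\mathbb Z\Gamma$ is not Noetherian and $M$ need not have the maximal condition on submodules, so Zorn's lemma has to be fed by hand: for a chain in $\mathcal C$ — which, $M$ being countable, we may take to be an ascending chain $N_1\subseteq N_2\subseteq\cdots$ — one needs $N^{\ast}=\bigcup_n N_n$ to lie again in $\mathcal C$. Residual finiteness and $\pi$-torsion-freeness of $M/N^{\ast}$ pass routinely to the limit; the hard point is that infinite upper rank need \emph{not} survive, because the finite quotient modules witnessing large rank for the various $M/N_n$ may involve ever-changing (and ever-larger) primes and be killed in the limit. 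Handling this is the crux and where I expect most of the work. The line I would pursue is to use the finiteness of the local upper ranks $\mathrm{ur}_p(M)$, analysed through the module-theoretic machinery of \cite{S3} (and the generalisations it would require here), to show that the finite quotient modules of $M$ of rank above any given bound have annihilators lying in some Noetherian portion of the ambient group ring — so that the segment of the submodule lattice of $M$ that carries the unbounded ranks does enjoy the maximal condition, and Zorn applies there. Isolating the right ``Noetherian portion'' and showing that the witnesses to infinite upper rank cannot leak out of it along a chain is the step I expect to be the main obstacle.
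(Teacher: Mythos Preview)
The paper does not actually prove this proposition: it explicitly declines to, saying only that the result follows ``with quite a lot of extra work, generalizing some ideas from \cite{S3}'', and that ``there seems little point in including the proof here''. So there is no argument in the paper to compare yours against beyond that one-line hint.

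Your outline is consistent with that hint: a minimal-counterexample construction via Zorn's lemma, with the substantive input coming from the module theory over group rings of abelian minimax groups developed in \cite{S3}, suitably extended. You have also correctly located the crux --- that infinite upper rank need not persist along an ascending chain of quotients, because $\mathbb{Z}\Gamma$ is not Noetherian --- and this is presumably where the ``quite a lot of extra work'' resides.

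One point does deserve correction. You assert that residual finiteness of $M/N^{\ast}$ ``passes routinely to the limit'' along an ascending chain $(N_n)$. It does not, in general. Knowing that each $M/N_n$ is residually finite gives, for $x\notin N^{\ast}$, cofinite submodules $L_n\supseteq N_n$ avoiding $x$; but one needs a single cofinite $L\supseteq N^{\ast}=\bigcup_n N_n$ avoiding $x$, and the $L_n$ need not be compatible. (Already for $\Gamma=1$: take $M=\bigoplus_{i\geq 1}\mathbb{Z}e_i$ and $N_n=\langle e_i-(i{+}1)e_{i+1}:i<n\rangle$; each $M/N_n$ is free abelian, but in $M/N^{\ast}$ the image of $e_1$ is divisible by every integer and hence dies in every finite quotient.) So this step, like the chain step for infinite upper rank, also needs genuine input from the structure theory rather than a formal limit argument; the same applies to the residual finiteness of $\widetilde M/T'$ in your torsion-freeness step. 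These are not fatal to the strategy, but they are more than ``routine''.
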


(Here $\mathrm{spec}(\Gamma)$ denotes the (finite) set of primes $p$ such that
$\Gamma$ has a section $C_{p^{\infty}}$.)

This reduces the problem to consideration of a `minimal counterexample', in a
rather weak sense. Whether this is any help is not clear, and there seems
little point in including the proof here.

Further results that may be relevant are obtained in \cite{KL1}; these can be
used to show that a module like our putative counterexample has many
finite-rank quotients that split as direct sums.

\bigskip

\end{document}